\documentclass[final,hidelinks,onefignum,onetabnum]{siamart251216}
\usepackage{amsfonts,amssymb,mathtools,graphicx,booktabs,needspace,placeins}
\newsiamremark{assumption}{Assumption}
\newsiamremark{remark}{Remark}
\newcommand{\R}{\mathbb R}
\newcommand{\dd}{\,\mathrm d}

\allowdisplaybreaks
\headers{Nonlinear Stability with General Motility}{X. Cheng, X. Lai, and X. Song}
\title{Nonlinear Stability of Nonconstant Steady States in
Chemotaxis--Consumption Systems with General Motility}
\author{Xinyu Cheng\thanks{Research Institute of Intelligent Complex Systems,
Fudan University, Shanghai, P.R. China (\texttt{xycheng@fudan.edu.cn}).}
\and Xuewen Lai\thanks{Research Institute of Intelligent Complex Systems,
Fudan University, Shanghai, P.R. China (\texttt{xwlai26@m.fudan.edu.cn}).}
\and Xu Song\thanks{School of Mathematics, South China Normal University,
Guangzhou, Guangdong, P.R. China(\texttt{songxu@m.scnu.edu.cn}).}}
\begin{document}
\maketitle
\begin{abstract}
We prove nonlinear stability of positive nonconstant steady states for a
chemotaxis--consumption system with nutrient-dependent motility and a fixed
boundary nutrient concentration $b$. The motility satisfies two monotonicity
conditions met by power laws and saturating functions. For each
prescribed population mass, sufficiently large nutrient diffusivity $D$ yields a
unique positive steady state. Small compatible perturbations generate unique
global solutions converging exponentially to the steady state with their
conserved mass. The result holds on general bounded domains in two and three
dimensions, with a lower-regularity version on an interval. At order
$D^{-1}$, the nutrient deficit is the torsion function multiplied by the
mean population density and boundary nutrient value; the population contrast
also depends on $\kappa_b=b\phi'(b)/\phi(b)$. Computations illustrate
relaxation from compatible data and convergence to different steady states
when the initial mass changes.
\end{abstract}
\begin{keywords}
chemotaxis--consumption, nutrient-dependent motility, nonconstant steady states,
nonlinear stability, energy estimates
\end{keywords}

\section{Introduction}\label{sec:intro}
Consider the chemotaxis--consumption system
\begin{equation}\label{eq:model}
 u_t=\Delta(u\phi(w)),\qquad w_t=D\Delta w-uw,
 \qquad \partial_\nu(u\phi(w))=0,\quad w|_{\partial\mathcal O}=b>0.
\end{equation}
Here $u$ is the population density, $w$ the nutrient concentration, and
$D>0$ the nutrient diffusivity. The domain $\mathcal O$ is an interval or a
bounded domain in two or three dimensions. The boundary conditions conserve
population mass and prescribe a nutrient reservoir at the boundary.
Integrating the population equation gives
\[
 \int_{\mathcal O}u(x,t)\dd x=\int_{\mathcal O}u(x,0)\dd x.
\]
At a positive equilibrium, $U\phi(W)=Z$ is constant and
$D\Delta W=UW>0$. Thus $W$ lies below its boundary value in the
interior, while $U=Z/\phi(W)$ is nonincreasing as a function of $W$.
For strictly increasing motility, the population is denser where
nutrient is lower and dispersal is slower.

Signal-dependent dispersal extends the Keller--Segel framework
\cite{KellerSegel1971}. Starvation-driven diffusion \cite{ChoKim2013} and
the metric-of-food model \cite{ChoiKim2015} relate movement to available
resources. Density-suppressed motility provides another feedback mechanism
\cite{LiuScience2011,FuTang2012}; analytical results for signal-dependent
motilities include \cite{JiangLaurencot2021,JinWang2020}.
In \eqref{eq:model},
the population flux is
\[
 -\nabla(u\phi(w))=-\phi(w)\nabla u-u\phi'(w)\nabla w.
\]
We consider motility increasing with nutrient on $(0,b]$, so the second
term points towards lower nutrient concentrations. Our assumptions also
require $s/\phi(s)$ to be nondecreasing there. Equivalently,
$0\le s\phi'(s)\le\phi(s)$: the relative increase in motility is no
larger than the relative increase in nutrient. Power laws $s^\alpha$,
$0<\alpha\le1$, describe proportional or sublinear responses, while
$1-e^{-s}$ and $s/(1+s)$ describe saturation at high concentrations.
The estimates below depend on these monotonicity assumptions and on
local derivative bounds for $\phi$.

Monotonicity of
$s/\phi(s)$ permits comparison for the stationary nutrient equation,
whereas monotonicity of $\phi$ makes the population mass increase along
the resulting family of profiles. During the evolution, the estimates
use the values and derivatives of $\phi$ on a fixed positive nutrient
interval maintained by the boundary supply.

The consumption model and its variants are surveyed in
\cite{LankeitWinklerSurvey}. Under homogeneous no-flux conditions, global
existence, boundedness, and relaxation have been studied in
\cite{LiZhao2021,LiWangPan2021,LiWinkler2023,LaurencotHomogeneity}.
Tao and Winkler \cite{TaoWinkler2023} constructed global very weak--strong
solutions for inverse-power-type motilities in arbitrary dimensions and
obtained stronger solutions under additional restrictions. Their interval
result allows general positive $C^3$ motilities. For degenerate local sensing,
nutrient depletion can lead to loss of population diffusivity
\cite{WinklerDegenerate2024,WinklerMaximum2024,Laurencot2024}.
When $D$ exceeds the threshold in Section~\ref{sec:main}, the steady
nutrient profile has a positive lower bound, and the nutrient concentration
remains in a common positive interval for small compatible perturbations. Small-data relaxation and one-dimensional large-data decay
also arise in related hyperbolic--parabolic systems
\cite{LiLiZhao2011,LiPanZhao2015}.

For prescribed nutrient boundary values, stationary states and boundary-layer
profiles have been investigated in
\cite{BraukhoffLankeit2019,LeeWangYang2020,LankeitWinkler2022,HuangXueZhaoZheng2026}.
Hong and Wang \cite{HongWang2021} used an antiderivative formulation to prove
one-dimensional nonlinear stability under mixed boundary conditions.
Song, Li, and Zhang \cite{SongLiZhang2024} treated spiky steady states for
power-law motilities on the half-line by weighted energy estimates.
Li and Li \cite{LiLi2023} proved nonradial stability in general smooth domains
for the population flux $\nabla u-u\nabla v$, coupled to nutrient diffusion
and consumption. Their fixed-mass results hold in $W^{1,p}$, $p>n$, in arbitrary
dimensions, cover both sufficiently large and sufficiently small nutrient
diffusion, and allow every positive diffusivity when the boundary nutrient
concentrations are at most one. Their argument uses spectral estimates and
linearized stability. In the product-flux model \eqref{eq:model}, population
diffusivity varies with nutrient concentration and its derivatives are tied
to the drift; these coefficients must be controlled in the nonlinear energy.
For logarithmic sensing, Carrillo, Li, Wang, and Yang
\cite{CarrilloLiWangYang2026} constructed small-diffusion boundary spike layers
in general domains and obtained more detailed profile expansions and nonlinear
stability in the radial setting. Our result concerns large nutrient diffusion
and permits nonradial perturbations.

\begin{theorem}[Informal main theorem]\label{thm:simple}
For the structural class of motilities specified in Section~\ref{sec:main},
and for sufficiently large nutrient diffusion, each mass near a prescribed
positive mass admits a unique positive steady state. Small compatible
perturbations produce unique global solutions converging exponentially to
the steady state with their actual conserved mass. This holds in
$H^1\times H^3$ on general bounded domains in dimensions two and three,
and in $L^2\times H^1$ on an interval. The steady profiles and the
limiting equilibrium depend locally Lipschitz-continuously on mass and
initial data, respectively; the decay constants are uniform for nearby masses.
\end{theorem}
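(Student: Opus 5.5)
The plan has three stages: construct the steady states, prove nonlinear stability by energy estimates, and then track constants for the Lipschitz and uniformity claims.

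\emph{Steady states.} Write a positive equilibrium as $U=Z/\phi(W)$ with $Z>0$ constant. The problem then reduces to the scalar boundary value problem
\[
D\Delta W=\frac{ZW}{\phi(W)},\qquad W|_{\partial\mathcal O}=b.
\]
Since $s\mapsto s/\phi(s)$ is nondecreasing on $(0,b]$, this is a monotone semilinear problem. I would obtain existence and uniqueness by sub/supersolutions and comparison, with $b$ as supersolution. For a subsolution I would use $b-\frac{Zb}{D\phi(b)}\theta$, where $\theta$ is the torsion function ($-\Delta\theta=1$, $\theta|_{\partial\mathcal O}=0$). For $D$ large this gives the lower bound $W\ge b-CZ/D>0$, and hence the positive nutrient interval used later. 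Comparison also shows that $W_Z$ is nonincreasing in $Z$. Because $\phi$ is increasing, the mass
\[
M(Z)=\int Z/\phi(W_Z)
\]
is then strictly increasing and continuous, with $M(Z)=Z|\mathcal O|/\phi(b)+O(D^{-1})$. By the implicit function theorem, $M'(Z)\ge|\mathcal O|/\phi(b)-O(D^{-1})>0$, so each mass near the prescribed one determines a unique $Z$, and $Z$ depends locally Lipschitz-continuously on the mass. The linearized operator $D\Delta-\partial_W(ZW/\phi(W))$ is invertible, which gives the Lipschitz dependence of $(U,W)$ on the mass.

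\emph{Nonlinear stability.} Fix the conserved mass $m$ of the data and set $u=U+\rho$, $w=W+\psi$, where $(U,W)$ is the steady state of mass $m$. Then $\int\rho=0$ and $\psi|_{\partial\mathcal O}=0$. Since the problem is quasilinear only through $\phi(w)$, the natural unknown is $q=u\phi(w)-Z$, which satisfies a Neumann condition. The perturbation equations are
\[
\rho_t=\Delta q,\qquad \psi_t=D\Delta\psi-U\psi-W\rho-\rho\psi .
\]
The basic energy is built as follows. Test the first equation with $\Delta^{-1}_N\rho$, or equivalently use the antiderivative in one dimension, which explains the $L^2\times H^1$ setting on the interval. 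Test the second equation with $\psi$ and $-\Delta\psi$. Writing $q=\phi(w)\rho+U(\phi(w)-\phi(W))$, the dissipation $\int\phi(w)\rho^2$ appears from the first test. The cross terms $\int U\phi'(W)\psi\rho$ and $\int W\rho\psi$ are absorbed by $D\|\nabla\psi\|^2$ together with Poincaré's inequality, using that $D$ is large. This is where the largeness of $D$ is essential: coupling terms are $O(1)$ while nutrient dissipation is $O(D)$. For the multidimensional statement I would raise regularity to $H^1\times H^3$ by differentiating in time or applying $\nabla$ and $\Delta$. The $H^3$ norm of $\psi$ controls $\nabla\phi(w)$ and $\nabla^2\phi(w)$ in $L^\infty$ in dimensions two and three, via Sobolev embedding. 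Nonlinear terms are bounded by $C\mathcal E^{1/2}\mathcal D$. This closes an a priori estimate $\frac{d}{dt}\mathcal E+c\mathcal E\le0$ for small energy, with $w$ kept in a fixed positive interval by the embedding. Combined with local well-posedness of the parabolic system under compatibility conditions and a continuation argument, this gives unique global solutions and exponential decay. Lipschitz dependence of the limit on the data then follows from Lipschitz dependence on the mass, and the decay constants depend only on bounds uniform for nearby masses.

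The main obstacle is the higher-order estimate for the population component. The equation $\rho_t=\Delta(\phi(w)\rho+\dots)$ involves derivatives of the variable coefficient $\phi(w)$, whose top derivatives require $\psi$ in a space one derivative above $\rho$. I would first try an energy weighted by $1/\phi(w)$, namely $\int\phi(w)|\nabla q|^2$-type quantities. The aim is for the top-order terms of $\psi$ to appear only through $\psi_t$ and $\Delta\psi$, which are controlled by the $H^3$ nutrient dissipation.
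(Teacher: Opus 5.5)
Your steady-state stage is sound. The positive torsion subsolution is a legitimate substitute for the paper's $\phi+\varepsilon$ regularization. Monotonicity alone gives $M(Z_2)-M(Z_1)\ge (Z_2-Z_1)|\mathcal O|/\phi(b)$, so neither the implicit function theorem nor an $O(D^{-1})$ correction is needed. Your low-order test with $(-\Delta_N)^{-1}\rho$ is exactly the paper's Neumann-inverse identity, closed by $D>D_{\rm en}$.

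The gap is the step you yourself call the main obstacle: no high-order energy is produced, and the routes you list do not close. Applying $\nabla$ or $\Delta$ to the equations does not preserve the Neumann condition on a curved boundary. Moreover, $\rho=u-U$ has no homogeneous boundary condition at all: zero flux gives only $\partial_\nu(a\rho+U\Theta)=0$, with $a=\phi(w)$ and $\Theta=\phi(w)-\phi(W)$. So $H^1$-level tests on $\rho$ leave boundary terms. The weighted energy you mention is not specified, and the natural candidates produce either $\int a_t|\cdot|^2$ with $a_t=\phi'(w)\psi_t$, which needs $\psi_t\in L^\infty$, or $\nabla a$ at top order. Your regularity bookkeeping is also off: $H^3\not\hookrightarrow W^{2,\infty}$ for $n=2,3$, so $\nabla^2\phi(w)$ is not in $L^\infty$ in general. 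Only $H^3\hookrightarrow W^{1,\infty}$ and multiplier bounds $\|fg\|_{H^r}\le C\|f\|_{H^3}\|g\|_{H^r}$ are available.

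The missing idea is to put the population perturbation in non-divergence form and estimate time derivatives instead of space derivatives.
\begin{itemize}
\item With the Neumann potential $-\Delta q=u-U$, $\partial_\nu q=0$, $\int q=0$, the perturbation $u-U\in H^1$ becomes $q\in H^3$, the same level as $\psi$. The equation is $q_t=\Pi_0[a\Delta q-U\Theta]$, and testing with $-\Delta q$ gives $\int a|\Delta q|^2$ with no derivative of $a$.
\item Since $q_t$ and $\psi_t$ inherit the homogeneous conditions, differentiate in time: $q_{tt}=\Pi_0[a\Delta q_t-U\phi'(w)\psi_t+\phi'(w)\psi_t\Delta q]$. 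The only coefficient derivative enters through $\|\psi_t\Delta q\|_2\le C\|\psi_t\|_{H^1}\|\Delta q\|_{H^1}\le C\mathcal E$.
\item Your own flux variable $J=u\phi(w)-Z$ leads to the same level. It satisfies $J_t=a\Delta J+u\phi'(w)\psi_t$ with $\partial_\nu J=0$, and $J=-q_t+\langle J\rangle$.
\item Recover $H^3$ from the energy and $H^4$ from the dissipation by elliptic estimates in a fixed order: first $\Delta q=a^{-1}(q_t+U\Theta+c(t))$, then $D\Delta\psi=\psi_t+U\psi-w\Delta q$. This also gives $\mathcal E\le C\mathcal D$ and the $L^\infty$ bound keeping $w\in[b/4,5b/4]$.
\end{itemize}

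Two further pieces are missing. First, the local theory you cite as standard must attain the energy at $t=0$ even though $Q_t(0)$ lies only in $H^1$. Compatibility is what supplies $\psi_t(0)\in H^1_0$, and the paper builds the solution by solving parabolic problems for $Q_t$. Second, on the interval the antiderivative basic energy controls only $u-U$ in $H^{-1}$. The $L^2\times H^1$ result needs the next test, of $P_t=[(U+P_x)a]_x$ with $-P_{xx}$. There the term $\int P_x\phi'(w)(W_x+\psi_x)P_{xx}$ is controlled using $\|W_x\|_\infty\le C/D$ and $\|P_x\|_\infty\le C\|P_x\|_2^{1/2}\|P_{xx}\|_2^{1/2}$.
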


The precise assumptions and solution classes are given in
Theorems~\ref{thm:main} and \ref{thm:interval}. The sufficient diffusion
bounds depend explicitly on the domain volume, torsion function, and
Dirichlet Poincar\'e constant.

The Neumann potential gains two spatial derivatives over
the population perturbation. Estimating the potential and nutrient in
the same $H^3$ space therefore gives the physical topology
$H^1\times H^3$. In one dimension an antiderivative gains one derivative
and allows the lower topology $L^2\times H^1$. Both formulations keep
mass conservation in the boundary conditions for the potential.
The additional estimates required in higher dimensions are discussed in
Remark~\ref{rem:higher-dimensions}.

The proof uses three ingredients.

\noindent\textbf{(i) Stationary comparison.}
The product $U\phi(W)$ is constant at equilibrium. Eliminating $U$ reduces
the steady problem to a scalar nutrient equation with a mass constraint.
Monotonicity gives uniqueness and orders the steady profiles by mass.
Comparison with the torsion function gives a uniform positive lower bound,
which permits passage from regularized motility to the degenerate model.

\noindent\textbf{(ii) Coupled potential energy.}
For a population perturbation of zero mean, define
$-\Delta q=u-U$, $\partial_\nu q=0$, $\int q=0$.
The Neumann-inverse duality structure used in
\cite[Lemma~3.1]{TaoWinkler2023}, centered here at a nonconstant equilibrium,
keeps $\phi(w)$ inside a positive dissipative term. Coupling this identity
to the nutrient energy gives low-order decay on general domains.

\noindent\textbf{(iii) Time derivatives and elliptic recovery.}
We estimate the state and its first time derivative using their
homogeneous boundary conditions. The population equation first gives
control of the Laplacian of the potential; the nutrient equation then
recovers the nutrient derivatives. In this order, scalar elliptic
estimates bound the $H^3$ norm by the energy and the $H^4$ norm by its
dissipation. The resulting pointwise control keeps the nutrient positive,
and Sobolev product estimates bound the nonlinear remainders by the
perturbation size times the dissipation. The energy estimate therefore
closes for small compatible data.

For data of mass $\mu_0$ near a reference state $(U_m,W_m)$, the
energy argument is applied after centering at $(U_{\mu_0},W_{\mu_0})$.
The resulting population perturbation has zero mean. Lipschitz
dependence of the profiles on mass then gives both convergence to the
selected equilibrium and the residual distance from the reference state.

\begin{remark}[Main difficulties]\label{rem:intro-difficulties}
The equilibrium is spatially varying, and its coefficients enter both the
principal part and the nonlinear coupling. Direct spatial differentiation
on a curved boundary does not preserve the Neumann condition. The low-order
energy controls the nutrient perturbation only in $L^2$, which does not
ensure that the population diffusivity stays positive. The construction
must attain the initial energy although $q_t(0)$ belongs only to $H^1$,
where a normal trace need not be defined. The coefficient bounds and energy
estimates must also be uniform over nearby masses.
\end{remark}

Section~\ref{sec:main} states the assumptions and main theorem.
Sections~\ref{sec:steady}--\ref{sec:global} give the steady estimates,
the nonlinear energy argument, and the stability proof. Section~\ref{sec:numerics}
presents the numerical observations. Appendix~\ref{app:technical} contains
the steady-profile regularity and construction details, Appendix~\ref{app:interval} gives
the interval result at lower regularity, and Appendix~\ref{app:numerics}
records the numerical method, further figures, and refinement checks.

\section{Assumptions and main result}\label{sec:main}
The domain $\Omega\subset\R^n$, $n=2,3$, is bounded and connected with
$C^4$ boundary. For the interval result set $\mathcal O=I=(0,1)$;
otherwise $\mathcal O=\Omega$. We write $\|\cdot\|_p$ for the
$L^p(\mathcal O)$ norm, omit the domain from Sobolev norms, and use product
norms for pairs; thus $\|Q\|_{H^k}^2=\|q\|_{H^k}^2+\|\psi\|_{H^k}^2$
for $Q=(q,\psi)$. Set
$\langle f\rangle=|\mathcal O|^{-1}\int_{\mathcal O}f$.
When a time interval $[0,T]$ is fixed, $C H^k$ and $L^2H^k$ abbreviate
$C([0,T];H^k)$ and $L^2(0,T;H^k)$, respectively.
Constants denoted by $C$ may change between lines.

\begin{assumption}[Motility]\label{ass:phi}
The function $\phi\in C([0,\infty))\cap C^3((0,\infty))$ satisfies
$\phi(0)=0$, $\phi(s)>0$ for $s>0$, and
\begin{equation}\label{eq:structure}
 0\le s\phi'(s)\le\phi(s),\qquad 0<s\le b.
\end{equation}
For the multidimensional result, assume in addition that
$\phi\in C^5(I_b)$, where $I_b=[b/4,5b/4]$.
\end{assumption}
Condition \eqref{eq:structure} is equivalent to monotonicity of both
$\phi$ and $g(s)=s/\phi(s)$ on $(0,b]$. Besides the examples in the
introduction, it includes $\phi(s)=s^\alpha/(1+s)^\beta$ for
$0\le\beta\le\alpha\le1$, $\alpha>0$.
The $C^4$ boundary provides the $H^4$ elliptic recovery used in the
dissipation estimate. The local $C^5$ assumption is sufficient for the
$H^3$ composition bounds, including that for $\phi''(W+\psi)$.

Let $\tau$ be the Dirichlet torsion function and $\lambda_1$ the first
Dirichlet eigenvalue of $-\Delta$ on $\Omega$:
\[
 -\Delta\tau=1\quad\hbox{in }\Omega,\qquad
 \tau=0\quad\hbox{on }\partial\Omega.
\]
We shall use the coefficient bounds
\begin{equation}\label{eq:constants}
 a_b=\min_{I_b}\phi>0,\qquad
 L_b=\max_{I_b}|\phi'|,\qquad
 M(\mu)=\frac{\mu\phi(b)}{|\Omega|\phi(b/2)}.
\end{equation}
In particular, $M(\mu)$ bounds the stationary population of mass $\mu$.
The three restrictions on nutrient diffusion enter at different stages
of the proof. Stationary comparison requires
\begin{equation}\label{eq:positivity-threshold}
 D_{\rm pos}(\mu)=\frac{2\mu}{|\Omega|}\|\tau\|_\infty.
\end{equation}
The low-order energy estimate uses
\begin{equation}\label{eq:low-threshold}
 D_{\rm en}(\mu)=\frac{(M(\mu)L_b+5b/4)^2}{2a_b\lambda_1}.
\end{equation}
For the gradient estimates, define
\begin{equation}\label{eq:coupling-bound}
 K(\mu)=\frac{2}{a_b}
       \left[\left(\frac{M(\mu)L_b}{\lambda_1}\right)^2
                    +\left(\frac{5b}{4}\right)^2\right]
              +\frac{M(\mu)}{\lambda_1}.
\end{equation}
The three coupling estimates in Section~\ref{sec:high-energy} give
$K(\mu)$ as the sum of the coefficients of $\|\Delta\psi\|_2^2$.
Set
\begin{equation}\label{eq:diffusion-threshold}
 D_3(\mu)=\max\{D_{\rm pos}(\mu),D_{\rm en}(\mu),4K(\mu)\}.
\end{equation}
By continuity of $D_3$, a fixed $D>D_3(m)$ also satisfies
$D>D_3(\mu)$ for all $\mu$ sufficiently close to $m$.

The stationary state of mass $\mu$ satisfies
\begin{equation}\label{eq:steady}
 U_\mu\phi(W_\mu)=Z_\mu,\qquad D\Delta W_\mu=U_\mu W_\mu,
 \qquad W_\mu|_{\partial\Omega}=b,\qquad \int_\Omega U_\mu\dd x=\mu.
\end{equation}
Section~\ref{sec:steady} proves existence and uniqueness when
$D\ge D_{\rm pos}(\mu)$, together with $b/2\le W_\mu\le b$.

\paragraph{Compatible data.}
A pair $(u_0,w_0)\in H^1(\Omega)\times H^3(\Omega)$ is compatible if
$w_0|_{\partial\Omega}=b$ and
\begin{equation}\label{eq:physical-compatibility}
 (D\Delta w_0-bu_0)|_{\partial\Omega}=0
 \quad\hbox{in }H^{1/2}(\partial\Omega).
\end{equation}
Differentiating the fixed nutrient boundary value in time gives
\eqref{eq:physical-compatibility}; the trace is defined because
$\Delta w_0,u_0\in H^1$.
An initial normal derivative of $u_0\phi(w_0)$ is not imposed at this
regularity; the zero-flux condition holds in the parabolic trace sense
at almost every positive time.

\begin{theorem}[Nonlinear stability and mass selection]\label{thm:main}
Suppose Assumption~\ref{ass:phi} holds. Fix $m>0$ and $D>D_3(m)$.
There exist $r_m,\delta,C,\omega>0$ such that every mass
$\mu\in I_m=[m-r_m,m+r_m]\Subset(0,\infty)$ has a unique positive
steady state $(U_\mu,W_\mu)$, and the following assertions hold.
For compatible initial data, put
\[
 d_0=\|u_0-U_m\|_{H^1}+\|w_0-W_m\|_{H^3},\qquad
 \mu_0=\int_\Omega u_0\dd x.
\]
If $d_0<\delta$, then $\mu_0\in I_m$ and \eqref{eq:model} has a unique
global strong solution with $b/4\le w\le5b/4$ and
\begin{equation}\label{eq:physical-decay}
 \|u(t)-U_{\mu_0}\|_{H^1}+\|w(t)-W_{\mu_0}\|_{H^3}
 \le Ce^{-\omega t}d_0,\qquad t\ge0.
\end{equation}
More precisely, set $\psi=w-W_{\mu_0}$ and let $q$ be the normalized
Neumann solution of $-\Delta q=u-U_{\mu_0}$. On every finite interval,
\begin{equation}\label{eq:strong-class}
\begin{aligned}
 Q=(q,\psi)&\in C([0,T];H^3(\Omega)^2)\cap L^2(0,T;H^4(\Omega)^2),\\
 Q_t&\in C([0,T];H^1(\Omega)^2)\cap L^2(0,T;H^2(\Omega)^2),\\
 Q_{tt}&\in L^2(0,T;L^2(\Omega)^2).
\end{aligned}
\end{equation}
The base conditions are $\partial_\nu q=0$, $\psi=0$, and $\int q=0$;
the first time derivatives satisfy the corresponding conditions for almost
every positive time. In addition,
\begin{align}
 |\mu_0-m|+\|U_{\mu_0}-U_m\|_{H^1}
 +\|W_{\mu_0}-W_m\|_{H^3}+\|Q(0)\|_{H^3}&\le Cd_0,
 \label{eq:recentering}\\
 \|Q(t)\|_{H^3}&\le Ce^{-\omega t}d_0,
 \label{eq:potential-decay}\\
 \|u(t)-U_m\|_{H^1}+\|w(t)-W_m\|_{H^3}
 &\le Ce^{-\omega t}d_0+C|\mu_0-m|.
 \label{eq:mass-selection-reference-offset}
\end{align}
The solution map is locally Lipschitz on compatible data on each finite
time interval, and the selected equilibrium depends locally
Lipschitz-continuously on the initial data. If $u_0\ge0$, then $u(t)\ge0$.
All smallness and decay constants may be chosen uniformly for reference
masses in a sufficiently small neighborhood of $m$.
\end{theorem}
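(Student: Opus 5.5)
The proof proceeds in four stages, following the three ingredients of the introduction and then recentering by mass.

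\emph{Stage 1: steady states on $I_m$.} Eliminating $U=Z/\phi(W)$ turns \eqref{eq:steady} into the scalar problem
\[
 D\Delta W=Z\,g(W),\qquad W|_{\partial\Omega}=b,\qquad Z\int_\Omega\frac{\dd x}{\phi(W)}=\mu,
\]
with $g(s)=s/\phi(s)$ nondecreasing by \eqref{eq:structure}. For fixed $Z$ the comparison principle gives a unique solution $W^Z$. Comparing with $b-(Z g(b)/D)\tau$ yields $W^Z\ge b/2$ as long as $D\ge D_{\rm pos}(\mu)$. Monotonicity of $\phi$ makes $Z\mapsto\mu(Z)$ strictly increasing and continuous, so each $\mu$ has a unique $Z_\mu$. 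Differentiating the mass constraint in $Z$ (implicit function theorem in $H^3$, with derivative bounded below by monotonicity) gives the Lipschitz bounds for $\mu\mapsto(U_\mu,W_\mu)$ in $H^1\times H^3$, indeed in $H^4$ by elliptic regularity on the $C^4$ boundary. Because $D>D_3(m)$ is open in $\mu$, choose $r_m$ so that $D>D_3(\mu)$ and all coefficient bounds are uniform on $I_m$. This gives the first half of \eqref{eq:recentering}. The bound $\|Q(0)\|_{H^3}\le Cd_0$ follows since $|\mu_0-m|\le C\|u_0-U_m\|_2$ and $\|q\|_{H^3}\le C\|u_0-U_{\mu_0}\|_{H^1}$ by Neumann regularity.

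\emph{Stage 2: a priori energy.} Center at $(U_{\mu_0},W_{\mu_0})$ and write the system for $Q=(q,\psi)$. Integrating the population equation once (the Neumann inverse) gives
\[
 q_t=\phi(W+\psi)(U-\Delta q)-Z+\text{const},
\]
with the constant fixed by $\int q=0$. This contains the principal term $-\phi(W+\psi)\Delta q$ and the coupling $U\phi'(W)\psi$. The nutrient equation becomes $\psi_t=D\Delta\psi-U\psi+W\Delta q+\Delta q\,\psi$.

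The low-order energy comes from testing the $q$-equation with $-\Delta q$ and the $\psi$-equation with $\psi$. Young's inequality on the coupling terms, with $M(\mu)L_b$, $5b/4$ and $\lambda_1$ as bounds, closes this exactly when $D>D_{\rm en}$. Next apply the same estimate to $Q_t$, which satisfies the same homogeneous boundary conditions. This time-differentiated estimate, with coefficients closed when $D>4K$, controls $\|\nabla q_t\|_2+\|\psi_t\|_2$ and dissipation $\|\Delta q_t\|_2+\|\nabla\psi_t\|_2$.

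Elliptic recovery then gives the higher norms. The $q$-equation bounds $\|\Delta q\|_{H^1}$ and $\|\Delta q\|_{H^2}$ by $q_t$ and lower terms. The $\psi$-equation then bounds $\psi$ in $H^3$ by the energy and in $H^4$ by the dissipation. The result is
\[
 \frac{d}{dt}\mathcal E+c\,\mathcal D\le C\,\mathcal E^{1/2}\mathcal D,\qquad \mathcal E\sim\|Q\|_{H^3}^2,
\]
with the $H^3$ composition bounds for $\phi(W+\psi)$ using $\phi\in C^5(I_b)$ and $b/4\le w\le 5b/4$. That pointwise range itself comes from $\|\psi\|_\infty\le C\|\psi\|_{H^2}$ being small. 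For $\mathcal E$ small this gives $\mathcal E'+c\mathcal E\le0$, hence \eqref{eq:potential-decay}. The bound \eqref{eq:physical-decay} follows from $\|u-U_{\mu_0}\|_{H^1}\le\|q\|_{H^3}$, and \eqref{eq:mass-selection-reference-offset} follows by the triangle inequality with Stage 1.

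\emph{Stage 3: local existence, uniqueness and continuation.} Build local strong solutions in the class \eqref{eq:strong-class}, for instance by Galerkin approximation or linearization and contraction in the $Q$ variables with $\phi$ regularized off $I_b$. The compatibility condition \eqref{eq:physical-compatibility} is exactly what puts $\psi_t(0)\in H^1_0$, so that the time-differentiated energy starts finite. The main obstacle, which I expect to be hardest, is attaining the initial energy: $q_t(0)$ is only in $H^1$, and its Neumann trace is undefined. To handle it, I would define the energy through $\|\nabla q_t\|_2$ and $\|\Delta q\|_{H^1}$ only, never through traces of $q_t$. Then I would approximate compatible data by smooth compatible data, for which the boundary conditions on $Q_t$ hold classically, derive the estimates uniformly, and pass to the limit. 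That limit gives continuity $Q_t\in C([0,T];H^1)$ by the Lions--Magenes lemma, and the boundary conditions for $Q_t$ for almost every $t$.

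Continuation by the a priori bound from Stage 2 gives global existence. Estimating the difference of two solutions in the low-order energy gives uniqueness and local Lipschitz dependence on each finite interval. Lipschitz dependence of the selected equilibrium follows because $\mu_0$ is a linear functional of $u_0$ and Stage 1 is Lipschitz in the mass. Nonnegativity of $u$ follows from the maximum principle for the linear equation $u_t=\Delta(\phi(w)u)$ with bounded smooth coefficients, tested with $u_-$. Uniformity in the reference mass holds because every constant depends only on bounds that Stage 1 makes uniform over $I_m$.
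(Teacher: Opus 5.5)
\paragraph{Main gap: the time-differentiated energy is one derivative short in $\psi$.}
Your Stage~2 energy therefore does not prove the $H^3$ statements. Testing the differentiated nutrient equation with $\psi_t$ puts only $\|\psi_t\|_2^2$ into the energy and $\|\nabla\psi_t\|_2^2$ into the dissipation. The only source of regularity for $\psi$ is $D\Delta\psi=\psi_t+U\psi-w\Delta q$. So your energy yields $\psi\in H^2$ and your dissipation yields $\psi\in H^3$, although the $q$-component does reach $H^3$ and $H^4$. Your $\mathcal E$ is thus comparable to $\|q\|_{H^3}^2+\|\psi\|_{H^2}^2$, not to $\|Q\|_{H^3}^2$. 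Neither the $H^3$ decay of $w-W_{\mu_0}$ in \eqref{eq:physical-decay} nor $Q\in C H^3\cap L^2H^4$ follows.

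The sketch is inconsistent on exactly this point. With your test, $\psi_t(0)\in L^2$ needs no compatibility condition. The linear couplings also have the same structure as the low-order estimate and close under $D>D_{\rm en}$. So neither the compatibility condition nor $D>4K$ is actually used.

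The missing step is to test the differentiated system with $-\Delta q_t$ and $-\Delta\psi_t$, as the paper does (together with the state-level test by $-\Delta q$ and $-\Delta\psi$). This requires $\psi_t\in H_0^1$, which \eqref{eq:potential-compatibility} supplies at $t=0$ and the construction supplies for a.e.\ $t>0$. It also uses $\|\psi_t\|_2\le\lambda_1^{-1}\|\Delta\psi_t\|_2$. The couplings $\int U\phi'(w)\psi_t\Delta q_t$, $\int U\psi_t\Delta\psi_t$ and $\int w\Delta q_t\Delta\psi_t$ then produce exactly the coefficient $K(m)$ of $\|\Delta\psi_t\|_2^2$, which is where $D>4K(m)$ enters. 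The cubic terms $\int\phi'(w)\psi_t\Delta q\,\Delta q_t$ and $\int\psi_t\Delta q\,\Delta\psi_t$ are bounded by $C\|\psi_t\|_{H^1}\|\Delta q\|_{H^1}\mathcal D^{1/2}\le C\mathcal E\mathcal D^{1/2}\le C\mathcal E^{1/2}\mathcal D$, using $\mathcal E\le C_{\rm P}\mathcal D$. With $\|\nabla\psi_t\|_2^2$ in the energy, recovering $q$ first and then $\psi$ gives the $H^3$ and $H^4$ bounds you assert.

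\paragraph{Smaller corrections.}
Your potential equation has the wrong sign. Since $q_t+u\phi(w)$ is Neumann-harmonic, $q_t=-\phi(W+\psi)(U-\Delta q)+Z+\text{const}=\Pi_0[a\Delta q-U\Theta]$, with principal part $+a\Delta q$. With your sign the equation is backward parabolic, and the $-\Delta q$ test would give growth rather than dissipation.

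A low-order difference estimate does give uniqueness, since the Gronwall coefficient $\|\Delta q_2\|_\infty^2$ is integrable because $q_2\in L^2H^4$. However, it gives Lipschitz dependence for $Q$ only in $H^1\times L^2$. Lipschitz dependence in the strong class, including for data of different masses, needs the linear estimate for the difference system with the steady-coefficient differences as forcing, as in \eqref{eq:difference-system}--\eqref{eq:finite-time-dependence}.

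Your Stage~3 route to the initial energy (smooth compatible approximation plus Lions--Magenes) is a workable alternative. The paper instead solves the parabolic problems for $Q_t$ directly, with initial value $\mathcal F(Q_0)\in H^1\times H_0^1$ in the form domain, and recovers $Q$ elliptically.
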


\begin{remark}[Higher dimensions]\label{rem:higher-dimensions}
The potential identity below is independent of dimension. The restriction
to $n=2,3$ enters through the $H^3$ product estimates. A higher-dimensional
extension uses $(q,\psi)\in H^s\times H^s$, hence physical perturbations
in $H^{s-2}\times H^s$, with greater regularity and the corresponding
compatibility conditions. For example, an integer $s>n/2+2$ makes
$H^{s-2}$ an algebra and embeds $H^s$ into $W^{1,\infty}$.
Such an extension would also require higher-order time estimates and
elliptic recovery.
\end{remark}

\section{Steady states and their dependence on mass}\label{sec:steady}
The following stationary estimates apply both to $\Omega$ and to
$I$. We use $\mathcal O$ for either domain and write
$D_{\rm pos}(\mu)=2\mu\|\tau\|_\infty/|\mathcal O|$.

\begin{proposition}[Stationary comparison]\label{prop:steady}
Under the structural assumptions on $\phi$, if $D\ge D_{\rm pos}(\mu)$,
there is a unique positive steady state of mass $\mu$. It satisfies
\begin{equation}\label{eq:stationary-bounds}
 0\le b-W_\mu\le\frac{\mu b}{D|\mathcal O|}\tau,
 \qquad \frac b2\le W_\mu\le b,
 \qquad 0<U_\mu\le\frac{\mu\phi(b)}{|\mathcal O|\phi(b/2)}.
\end{equation}
Suppose these diffusion bounds hold for every mass in a compact interval
$J\Subset(0,\infty)$. Then
\begin{equation}\label{eq:mass-basic-bounds}
 |Z_\mu-Z_\nu|\le\frac{\phi(b)}{|\mathcal O|}|\mu-\nu|,
 \qquad
 \|W_\mu-W_\nu\|_\infty
 \le\frac{b\|\tau\|_\infty}{D|\mathcal O|}|\mu-\nu|.
\end{equation}
For $\mu\ge\nu$ in $J$, the profiles satisfy $Z_\mu\ge Z_\nu$,
$W_\mu\le W_\nu$, and $U_\mu\ge U_\nu$.
On $\Omega$, for every finite $p>n$, the stronger regularity assumptions give
\begin{equation}\label{eq:mass-high-bounds}
 \|W_\mu-W_\nu\|_{W^{4,p}}+
 \|U_\mu-U_\nu\|_{W^{3,\infty}}\le C_J|\mu-\nu|.
\end{equation}
The profiles have uniform bounds in these spaces. On $I$, the $C^3$
assumption alone gives uniform $W^{1,\infty}$ bounds and
\begin{equation}\label{eq:mass-interval-bounds}
 \|U_\mu-U_\nu\|_2+\|W_\mu-W_\nu\|_{H^1}
 \le C_J|\mu-\nu|.
\end{equation}
\end{proposition}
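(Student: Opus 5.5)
We eliminate $U_\mu$ through $U_\mu=Z/\phi(W)$, so the steady problem becomes the scalar Dirichlet problem
\[
 D\Delta W=Z\,g(W),\qquad g(s)=\frac{s}{\phi(s)},\qquad W|_{\partial\mathcal O}=b,
\]
together with the mass constraint $Z\int_{\mathcal O}\phi(W)^{-1}=\mu$. The proof has three stages: solve this one-parameter problem for each $Z$, select $Z$ by the mass constraint, and then derive the Lipschitz bounds.

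\textbf{Step 1: the problem for fixed $Z$.} Fix $Z>0$. Since $g$ is nondecreasing on $(0,b]$, the nonlinearity is monotone, so the comparison principle gives uniqueness of a solution with values in $(0,b]$. The maximum principle gives $W\le b$, because $\Delta W\ge0$.

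For existence, we look for a solution with $W\ge b/2$. We regularize $\phi$ near $0$ by extending $g$ nondecreasingly and boundedly below $b/2$, then use sub- and supersolutions. The supersolution is $b$. The subsolution is $b-\frac{Zg(b)}{D}\tau$, which uses $g(W)\le g(b)$. Hence
\[
 0\le b-W\le \frac{Zb}{D\phi(b)}\tau .
\]
Once the bound $W\ge b/2$ is proved, the regularization is inactive and the solution solves the original problem.

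\textbf{Step 2: monotonicity in $Z$.} The map $Z\mapsto W^Z$ is nonincreasing by comparison, since $g\ge0$ is nondecreasing. Because $\phi$ is nondecreasing, $1/\phi(W^Z)$ is nondecreasing in $Z$. Therefore $Z\mapsto M(Z)=Z\int\phi(W^Z)^{-1}$ is continuous and strictly increasing, with $M(0^+)=0$. Continuity follows from elliptic stability.

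\textbf{Step 3: selecting $Z$ and the bounds.} Define $Z_\mu$ by $M(Z_\mu)=\mu$. Since $\phi(W)\le\phi(b)$, we have $\mu\ge Z|\mathcal O|/\phi(b)$, that is, $Z_\mu\le\mu\phi(b)/|\mathcal O|$. Inserting this into the Step 1 bound gives $b-W\le\frac{\mu b}{D|\mathcal O|}\tau$. Under $D\ge D_{\rm pos}$ this yields $W\ge b/2$; this is where the threshold is used. The bound on $U$ then follows from $\phi(W)\ge\phi(b/2)$. Existence of $Z_\mu$ needs $M$ to reach $\mu$. Before the lower bound on $W$ is known, this must be arranged by a continuity argument in $\mu$: along the branch the a priori bound keeps $W\ge b/2$, and the regularized problem has the same solutions. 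The ordering of profiles by mass follows directly from Step 2.

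\textbf{Step 4: Lipschitz dependence.} For $\mu\ge\nu$, ordering gives $Z_\mu\ge Z_\nu$ and $W_\mu\le W_\nu$, so $\phi(W_\mu)\le\phi(W_\nu)$. Then
\[
 \mu-\nu=Z_\mu\int\frac{1}{\phi(W_\mu)}-Z_\nu\int\frac{1}{\phi(W_\nu)}\ge (Z_\mu-Z_\nu)\int\frac{1}{\phi(W_\nu)}\ge (Z_\mu-Z_\nu)\frac{|\mathcal O|}{\phi(b)},
\]
which is the $Z$ estimate.

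For $W$, the difference $\delta=W_\nu-W_\mu\ge0$ satisfies
\[
 D\Delta\delta=Z_\nu g(W_\nu)-Z_\mu g(W_\mu)\le (Z_\nu-Z_\mu)g(W_\nu)+Z_\mu\bigl(g(W_\nu)-g(W_\mu)\bigr).
\]
The right-hand side is obtained by writing $Z_\nu g(W_\nu)-Z_\mu g(W_\nu)+Z_\mu g(W_\nu)-Z_\mu g(W_\mu)$. The first term has absolute value at most $|Z_\mu-Z_\nu|\,b/\phi(b/2)$, and the second term is $\ge0$ because $g$ is nondecreasing. The sign bookkeeping here needs care, and the first attempt is to use $-\Delta\delta\ge -\frac{|Z_\mu-Z_\nu|g(b)}{D}$ with $g(W)\le g(b)=b/\phi(b)$. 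Comparison with $\tau$ then gives $\delta\le\frac{|Z_\mu-Z_\nu|\,b}{D\phi(b)}\tau$, and combining with the $Z$ bound gives the stated $\|W_\mu-W_\nu\|_\infty$ estimate.

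\textbf{Step 5: higher norms.} On $\Omega$, elliptic $W^{2,p}$ theory bootstraps to $W^{4,p}$ with $p>n$, using $\phi\in C^5$ on $I_b$ and the $C^4$ boundary. The difference equation, linearized with $C^1$ coefficients, transfers the $L^\infty$ Lipschitz bound to $W^{4,p}$. Then $U=Z/\phi(W)$ gives the $W^{3,\infty}$ bound by Morrey's embedding. On $I$, an ODE/$H^1$ energy argument with $\phi\in C^3$ gives the interval estimate.

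\textbf{Main obstacle.} The delicate step is existence in Step 3: the lower bound $W\ge b/2$ is needed to avoid the degeneracy of $\phi$ at $0$, but it is itself derived from the mass constraint.
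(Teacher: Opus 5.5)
Your proof is correct in substance, but the existence step takes a different route from the paper's.

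\textbf{Comparison with the paper's route.} The paper regularizes globally by $\phi_\varepsilon=\phi+\varepsilon$. Then $g_\varepsilon(0)=0$ and $0$ is a subsolution for every $Z\ge0$, so $M_\varepsilon$ is continuous, strictly increasing and unbounded on all of $[0,\infty)$. It therefore attains $\mu$ with no condition on $D$. The threshold $D_{\rm pos}(\mu)$ is used only afterwards, to get $W_\varepsilon\ge b/2$ uniformly in $\varepsilon$. That bound allows a compactness passage $\varepsilon\downarrow0$ in $C^{2,\beta}$, followed by a separate uniqueness argument for unregularized positive states.

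You instead modify $g$ only below $b/2$ and use the explicit subsolution $b-Zg(b)\tau/D$. The modification is then inactive for every $Z\le Z^*:=D\phi(b)/(2\|\tau\|_\infty)$, and no limit is needed. The price is that $M$ is meaningful only where $W^Z\ge b/2$, which is the obstacle you flag. Your ``continuity argument in $\mu$'' can be replaced by the intermediate value theorem on $[0,Z^*]$. There $M$ is continuous and strictly increasing with $M(0)=0$, and $M(Z^*)\ge Z^*|\mathcal O|/\phi(b)=D|\mathcal O|/(2\|\tau\|_\infty)\ge\mu$ exactly when $D\ge D_{\rm pos}(\mu)$. This also explains the form of the threshold.

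For uniqueness, say explicitly that any positive steady state of mass $\mu$ has $Z\le\mu\phi(b)/|\mathcal O|\le Z^*$, hence $W\ge b/2$. So it solves your modified problem, and strict monotonicity of $M$ on $[0,Z^*]$ applies. Your route avoids the compactness step. The paper's route gives regularized solutions for every mass and diffusivity and uses the threshold only to remove the regularization.

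\textbf{Corrections.} There is a sign slip in Step~4. Your decomposition is an identity, not an inequality. Its first term is $-(Z_\mu-Z_\nu)g(W_\nu)\ge-(Z_\mu-Z_\nu)g(b)$, and its second term is nonnegative. Hence $D\Delta\delta\ge-(Z_\mu-Z_\nu)g(b)$, i.e.\ $-\Delta\delta\le(Z_\mu-Z_\nu)g(b)/D$, which is the subsolution inequality that comparison with $\tau$ requires. The inequality you wrote, $-\Delta\delta\ge-|Z_\mu-Z_\nu|g(b)/D$, yields only a lower bound on $\delta$. To get the stated constant, use $g(W_\nu)\le g(b)$ rather than $b/\phi(b/2)$. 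The paper writes the difference as $(-D\Delta+Z_\nu c)r=(Z_\mu-Z_\nu)g(W_\mu)$ with $c\ge0$, which makes the sign immediate.

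In Step~5, reaching $W^{4,p}$ requires the averaged coefficient to be bounded in $W^{2,p}$, not merely $C^1$. This follows from $g\in C^3([b/2,b])$ and the uniform profile bounds.
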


\begin{proof}
At a steady state, $U\phi(W)$ is a Neumann harmonic function, hence is
constant because the domain is connected. Set
$\phi_\varepsilon=\phi+\varepsilon$ and
$g_\varepsilon(s)=s/\phi_\varepsilon(s)$ for $0<\varepsilon\le1$.
Then $g_\varepsilon(0)=0$ and
$0\le g_\varepsilon'\le1/\varepsilon$ on $(0,b]$.
For fixed $Z\ge0$, the scalar problem
\[
 -D\Delta W+Zg_\varepsilon(W)=0,\qquad W|_{\partial\mathcal O}=b,
\]
has a solution between the subsolution $0$ and supersolution $b$.
The strong maximum principle gives $W>0$. Testing the difference of two
solutions with its positive part proves uniqueness. The same test shows
that $W_{\varepsilon,Z}$ decreases with $Z$.
If $Z_2\ge Z_1$ and $r=W_{\varepsilon,Z_1}-W_{\varepsilon,Z_2}$, then
\[
 (-D\Delta+Z_1c_\varepsilon)r
 =(Z_2-Z_1)g_\varepsilon(W_{\varepsilon,Z_2}),\qquad r|_{\partial\mathcal O}=0,
\]
where $c_\varepsilon\ge0$ is the bounded divided difference of
$g_\varepsilon$. Comparison with
$(Z_2-Z_1)g_\varepsilon(b)\tau/D$ proves continuity in $Z$.
Consequently
\[
 M_\varepsilon(Z)=Z\int_{\mathcal O}
       \phi_\varepsilon(W_{\varepsilon,Z})^{-1}\dd x
\]
is continuous and strictly increasing, with $M_\varepsilon(0)=0$ and
\[
 M_\varepsilon(Z)\ge\frac{Z|\mathcal O|}{\phi(b)+\varepsilon}.
\]
It therefore takes the value $\mu$ exactly once.

For this regularized state, the mass identity and monotonicity give
\[
 Z_\varepsilon\le\frac{\mu(\phi(b)+\varepsilon)}{|\mathcal O|},
 \qquad Z_\varepsilon g_\varepsilon(W_\varepsilon)
 \le\frac{\mu b}{|\mathcal O|}.
\]
Comparison of $b-W_\varepsilon$ with
$\mu b\tau/(D|\mathcal O|)$ gives \eqref{eq:stationary-bounds}, uniformly
in $\varepsilon$, including $W_\varepsilon\ge b/2$. Also,
\[
 \frac{\mu\phi(b/2)}{|\mathcal O|}\le Z_\varepsilon
 \le\frac{\mu(\phi(b)+\varepsilon)}{|\mathcal O|},\qquad
 U_\varepsilon\le\frac{\mu\phi(b)}{|\mathcal O|\phi(b/2)}.
\]
Uniform elliptic estimates on this positive range permit passage to
$\varepsilon\downarrow0$, strongly in $C^{2,\beta}$ for some
$\beta\in(0,1)$ after decreasing $\beta$. The limit has the prescribed mass.
Any positive unregularized state satisfies the same bounds, since
$W\le b$, $Z\le\mu\phi(b)/|\mathcal O|$, and $g$ is nondecreasing.
For two such states, $Z_1\le Z_2$ implies $W_1\ge W_2$. Their equal masses
then give
\[
 Z_2\int\frac1{\phi(W_2)}
 \ge Z_2\int\frac1{\phi(W_1)}
 \ge Z_1\int\frac1{\phi(W_1)}.
\]
Equality of the endpoints forces $Z_1=Z_2$; scalar comparison gives
$W_1=W_2$, so the full regularized family converges to this state.

For $\mu\ge\nu$ in $J$, the same ordering yields $Z_\mu\ge Z_\nu$ and
$W_\mu\le W_\nu$. Monotonicity of $\phi$ then gives
\[
 U_\mu=\frac{Z_\mu}{\phi(W_\mu)}
 \ge\frac{Z_\nu}{\phi(W_\nu)}=U_\nu.
\]
Subtracting the mass identities gives
\[
 \mu-\nu\ge (Z_\mu-Z_\nu)\int\phi(W_\mu)^{-1}
 \ge\frac{|\mathcal O|}{\phi(b)}(Z_\mu-Z_\nu).
\]
With $r=W_\nu-W_\mu\ge0$ and
$c=\int_0^1g'(W_\mu+\theta r)\dd\theta$, the difference equation is
\begin{equation}\label{eq:steady-difference}
 (-D\Delta+Z_\nu c)r=(Z_\mu-Z_\nu)g(W_\mu),
 \qquad r|_{\partial\mathcal O}=0.
\end{equation}
Comparison with $(Z_\mu-Z_\nu)g(b)\tau/D$ proves
\eqref{eq:mass-basic-bounds}. The higher-order profile bounds follow by scalar elliptic estimates
applied to this difference equation; the details are given in
Appendix~\ref{app:steady-proof}.
\end{proof}

\begin{proposition}[Profiles at large nutrient diffusion]\label{prop:large-D}
Fix a compact mass interval $J\Subset(0,\infty)$ and assume the
structural and $C^3$ conditions in Assumption~\ref{ass:phi}.
Write $(U_D,W_D)$ for the steady
state of mass $m\in J$, displaying its dependence on $D$, and put
\[
 \rho=\frac{m}{|\mathcal O|},\qquad
 \kappa_b=\frac{b\phi'(b)}{\phi(b)}\in[0,1].
\]
There are $D_*,C>0$, independent of $m\in J$ and $D\ge D_*$, such that
\begin{equation}\label{eq:large-D-profiles}
 \left\|W_D-b+\frac{\rho b}{D}\tau\right\|_\infty
 +\left\|U_D-\rho-\frac{\rho^2\kappa_b}{D}
                  (\tau-\langle\tau\rangle)\right\|_\infty
 \le\frac{C}{D^2}.
\end{equation}
Here $\mathcal O$ may be $\Omega$ or $I$, with its Dirichlet torsion
function $\tau$; $\kappa_b$ is the relative sensitivity of motility at
the boundary nutrient value.
\end{proposition}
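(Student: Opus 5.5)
We expand the steady state in powers of $D^{-1}$, using the stationary comparison of Proposition~\ref{prop:steady} for the first-order bounds and the linear difference equation \eqref{eq:steady-difference}-type comparison with the torsion function for the remainders. For $D\ge D_*\ge\max_{J}D_{\rm pos}$, Proposition~\ref{prop:steady} gives $0\le b-W_D\le \rho b\tau/D$ and $b/2\le W_D\le b$. Hence $\|W_D-b\|_\infty\le C/D$ uniformly for $m\in J$. This already gives $\|U_D\|_\infty\le C$.

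\textbf{First-order estimate for $Z$ and $U$.} Write $Z=Z_D$, so $U_D=Z/\phi(W_D)$ and $\rho|\mathcal O|=Z\int\phi(W_D)^{-1}$. Since $\phi\in C^1$ on $[b/2,b]$,
\[
\phi(W_D)^{-1}=\phi(b)^{-1}+O(D^{-1}).
\]
The mass identity then yields $Z=\rho\phi(b)+O(D^{-1})$, and so $U_D=\rho+O(D^{-1})$ uniformly in sup norm.

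\textbf{Second-order estimate for $W$.} Set $\eta=b-W_D$. It satisfies $-D\Delta\eta=-U_DW_D$ with $\eta|_{\partial\mathcal O}=0$, that is,
\[
D\Delta\eta=-U_DW_D .
\]
Equivalently $-\Delta\eta=U_DW_D/D$, because $\Delta W_D=U_DW_D/D$. From the previous step, $U_DW_D=\rho b+O(D^{-1})$ in $L^\infty$. Therefore $\eta-\rho b\tau/D$ solves
\[
-\Delta(\eta-\rho b\tau/D)=f/D,\qquad \|f\|_\infty\le C/D,
\]
with zero Dirichlet data. Comparison with $\pm(C/D^2)\tau$ gives $\|W_D-b+\rho b\tau/D\|_\infty\le C\|\tau\|_\infty/D^2$, which is the first half of \eqref{eq:large-D-profiles}.

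\textbf{Second-order estimate for $U$.} Expand $1/\phi$ about $b$ with the $C^2$ bound on $[b/2,b]$:
\[
\frac1{\phi(W_D)}=\frac1{\phi(b)}+\frac{\phi'(b)}{\phi(b)^2}\,\eta+O(\eta^2),
\]
where $\eta=\rho b\tau/D+O(D^{-2})$. Thus
\[
\frac{\phi(b)}{\phi(W_D)}=1+\frac{\rho\kappa_b}{D}\tau+O(D^{-2}).
\]
Inserting this into the mass identity gives
\[
\rho=\frac{Z}{\phi(b)}\Bigl(1+\frac{\rho\kappa_b}{D}\langle\tau\rangle+O(D^{-2})\Bigr),
\]
so that
\[
\frac{Z}{\phi(b)}=\rho-\frac{\rho^2\kappa_b}{D}\langle\tau\rangle+O(D^{-2}).
\]
Multiplying back out,
\[
U_D=\frac{Z}{\phi(b)}\cdot\frac{\phi(b)}{\phi(W_D)}=\rho+\frac{\rho^2\kappa_b}{D}(\tau-\langle\tau\rangle)+O(D^{-2})
\]
in $L^\infty$. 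The bounds $\kappa_b\in[0,1]$ come directly from \eqref{eq:structure} at $s=b$.

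\textbf{Main obstacle.} The main point to watch is uniformity of the constants in $m\in J$ and $D\ge D_*$. Every constant above depends only on $\sup_J m$, $|\mathcal O|$, $\|\tau\|_\infty$, and $C^2$ bounds of $\phi$ on $[b/2,b]$, which Assumption~\ref{ass:phi} supplies since $\phi\in C^3((0,\infty))$. The expansion step needs this care: the remainder $O(\eta^2)$ and the cross terms $O(D^{-1})\cdot O(D^{-1})$ must be tracked so that no factor of $D$ is lost. Because only sup-norm comparison with $\tau$ is used, the argument is identical on $\Omega$ and on $I$.
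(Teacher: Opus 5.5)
Your proposal is correct and follows essentially the paper's route: first-order sup bounds from Proposition~\ref{prop:steady}, comparison of $W_D-b+\rho b\tau/D$ with $\pm CD^{-2}\tau$, and a Taylor expansion of $1/\phi$ about $b$ normalized through the mass constraint. Your separate expansion of $Z/\phi(b)$ is the paper's reciprocal expansion of $\langle 1/\phi(W_D)\rangle$, since $Z=\rho/\langle 1/\phi(W_D)\rangle$. Two small fixes: your first display for $\eta$ has a sign slip and should read $-D\Delta\eta=U_DW_D$, as your next line correctly states; and the uniform constants also use the lower bound $\phi\ge\phi(b/2)>0$ on $[b/2,b]$, not only $C^2$ bounds.
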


\begin{proof}
Take $D_*\ge\max\{1,\sup_{m\in J}D_{\rm pos}(m)\}$.
Proposition~\ref{prop:steady} gives
$\|W_D-b\|_\infty\le C/D$ and $b/2\le W_D\le b$.
Set $\Phi=1/\phi$ on this positive interval. The mass constraint gives
\begin{equation}\label{eq:normalized-profile}
 U_D=\rho\,\frac{\Phi(W_D)}{\langle\Phi(W_D)\rangle}.
\end{equation}
The denominator is bounded away from zero uniformly, so
$\|U_D-\rho\|_\infty\le C/D$. Consequently
$\|U_DW_D-\rho b\|_\infty\le C/D$. The function
$R_D=W_D-b+\rho b\tau/D$ has zero boundary values and satisfies
\[
 \Delta R_D=\frac{U_DW_D-\rho b}{D}.
\]
Comparison with a constant multiple of $D^{-2}\tau$ proves
$\|R_D\|_\infty\le C/D^2$.

Put $h_D=W_D-b$. Since $\|h_D\|_\infty\le C/D$, Taylor's formula
gives the two expansions
\begin{align*}
 \Phi(W_D)&=\Phi(b)+\Phi'(b)h_D+O_{L^\infty}(D^{-2}),\\
 \langle\Phi(W_D)\rangle
          &=\Phi(b)+\Phi'(b)\langle h_D\rangle+O(D^{-2}).
\end{align*}
The remainders are uniform because $\Phi''$ is bounded on $[b/2,b]$.
The second line, with $\Phi(b)>0$, gives
\[
 \frac{1}{\langle\Phi(W_D)\rangle}
   =\frac{1}{\Phi(b)}
      -\frac{\Phi'(b)}{\Phi(b)^2}\langle h_D\rangle+O(D^{-2}).
\]
Multiplying the numerator and reciprocal denominator in
\eqref{eq:normalized-profile}, we obtain
\[
 U_D=\rho+\rho\frac{\Phi'(b)}{\Phi(b)}
                    (h_D-\langle h_D\rangle)+O_{L^\infty}(D^{-2}).
\]
The product of the two first-order terms is of order $D^{-2}$ and
is included in the remainder.
Substitute $h_D=-\rho b\tau/D+R_D$ and use
$\Phi'(b)/\Phi(b)=-\phi'(b)/\phi(b)$ to obtain
\eqref{eq:large-D-profiles}.
\end{proof}

\section{The nonlinear potential energy}\label{sec:energy}
Fix a mass $m$, assume $D>D_3(m)$, and abbreviate its steady state by
$(U,W)$. Constants below may depend on $D$. For an equal-mass
perturbation define
\begin{equation}\label{eq:potential}
 -\Delta q=u-U,\qquad \partial_\nu q=0,\qquad \int_\Omega q=0,
 \qquad \psi=w-W.
\end{equation}
Write $\Pi_0f=f-\langle f\rangle$, $a=\phi(W+\psi)$, and
$\Theta=\phi(W+\psi)-\phi(W)$. Since $U\phi(W)=Z$ is constant,
\begin{equation}\label{eq:potential-system}
\begin{aligned}
 q_t&=\Pi_0[a\Delta q-U\Theta],\\
 \psi_t&=D\Delta\psi-U\psi+(W+\psi)\Delta q.
\end{aligned}
\end{equation}
The homogeneous conditions are $\partial_\nu q=0$, $\psi=0$, and
$\int q=0$.

Put
$\mathcal J=u\phi(w)-U\phi(W)=-a\Delta q+U\Theta$.
The equations for the population and its potential imply
$-\Delta q_t=\Delta\mathcal J$. The boundary conditions make
$q_t+\mathcal J$ a Neumann harmonic function. Its spatial mean is
$\langle\mathcal J\rangle$, since $\int q_t=0$, and hence
\[
 q_t=-\mathcal J+\langle\mathcal J\rangle.
\]
\subsection{Low-order coercivity}
Define
\[
 E_0=\|\nabla q\|_2^2+\|\psi\|_2^2,\qquad
 D_0=\|\Delta q\|_2^2+\|\nabla\psi\|_2^2.
\]
Testing \eqref{eq:potential-system} with $-\Delta q$ and $\psi$ gives
the separate identities
\begin{align*}
 \frac12\frac{\dd}{\dd t}\|\nabla q\|_2^2
 +\int a|\Delta q|^2&=\int U\Theta\Delta q,\\
 \frac12\frac{\dd}{\dd t}\|\psi\|_2^2
 +D\|\nabla\psi\|_2^2+\int U\psi^2
 &=\int(W+\psi)\psi\Delta q.
\end{align*}
The first is the Neumann-inverse energy identity used in
\cite[Lemma~3.1]{TaoWinkler2023}, here centered at $(U,W)$.
No spatial derivative of $a$ occurs in its dissipative term.
Adding the identities yields
\begin{equation}\label{eq:low-identity}
 \frac12E_0'+\int_\Omega a|\Delta q|^2
 +D\|\nabla\psi\|_2^2+\int_\Omega U\psi^2
 =\int_\Omega[U\Theta+(W+\psi)\psi]\Delta q.
\end{equation}
The spatial mean in the first equation vanishes in this test because
$\int\Delta q=0$. On the range $W+\psi\in I_b$, the right-hand side is
bounded by $(M(m)L_b+5b/4)\|\psi\|_2\|\Delta q\|_2$.
Young's inequality and the Dirichlet Poincar\'e inequality yield
the bound
\begin{align*}
 &(M(m)L_b+5b/4)\|\psi\|_2\|\Delta q\|_2\\
 &\qquad\le\frac{a_b}{2}\|\Delta q\|_2^2
 +\frac{(M(m)L_b+5b/4)^2}{2a_b}\|\psi\|_2^2\\
 &\qquad\le\frac{a_b}{2}\|\Delta q\|_2^2
 +D_{\rm en}(m)\|\nabla\psi\|_2^2.
\end{align*}
We obtain
\begin{equation}\label{eq:low-energy}
 \frac12E_0'+\frac{a_b}{2}\|\Delta q\|_2^2
 +(D-D_{\rm en}(m))\|\nabla\psi\|_2^2\le0.
\end{equation}
We have $E_0'+cD_0\le0$ for some $c>0$ in any dimension. The
mean-zero Neumann and Dirichlet Poincar\'e inequalities give
$E_0\le C D_0$, and hence exponential decay while the nutrient remains
in $I_b$. Closing the argument requires a bound on $\|\psi\|_\infty$.
Since $E_0$ controls $q$ in $H^1$ and $\psi$ in $L^2$, we next estimate
one time derivative and recover the spatial regularity needed for
this bound.

\subsection{Energy equivalence and spatial recovery}
Let
\begin{equation}\label{eq:Y}
 \mathcal Y=\{(q,\psi)\in H^3(\Omega)^2:
       \partial_\nu q=0,\ \psi|_{\partial\Omega}=0,\ \int_\Omega q=0\}.
\end{equation}
The norm on $\mathcal Y$ is the product $H^3$ norm. For a state
$Q=(q,\psi)\in\mathcal Y$, write $Q_t=\mathcal F(Q)$ for the pair
defined by the right-hand sides of \eqref{eq:potential-system}.
It belongs to $H^1(\Omega)^2$ by Sobolev multiplication.
Along solutions it is the time derivative; at initial data this notation
means the derivative generated by the equation. A state is compatible if
\begin{equation}\label{eq:potential-compatibility}
 \psi_t|_{\partial\Omega}=0,
 \quad\hbox{equivalently}\quad
 (D\Delta\psi+b\Delta q)|_{\partial\Omega}=0.
\end{equation}
The trace in \eqref{eq:potential-compatibility} belongs to $H^{1/2}$;
the condition is linear in $Q$ and independent of the equilibrium mass.
Put $H_N^2=\{f\in H^2:\partial_\nu f=0\}$ and set
\begin{equation}\label{eq:energy-dissipation}
\begin{aligned}
 \mathcal E&=E_0+\|\nabla q\|_2^2+\|\nabla\psi\|_2^2
                       +\|\nabla q_t\|_2^2+\|\nabla\psi_t\|_2^2,\\
 \mathcal D&=D_0+\|\Delta q\|_2^2+\|\Delta\psi\|_2^2
                       +\|\Delta q_t\|_2^2+\|\Delta\psi_t\|_2^2.
\end{aligned}
\end{equation}
The low-order terms are added to the gradient energies of the state
and its first time derivative, giving weight two to
$\|\nabla q\|_2^2$ in $\mathcal E$ and to $\|\Delta q\|_2^2$ in
$\mathcal D$.
The dissipation is used only when $q_t\in H_N^2$ and
$\psi_t\in H^2\cap H_0^1$, as supplied at positive times by the
construction in Section~\ref{sec:global}.

The energy uses one time derivative, whose spatial regularity is two
orders lower than that of the state. Its initial value is determined
by $Q_t(0)=\mathcal F(Q_0)$. Compatibility supplies the zero Dirichlet trace of
its nutrient component. The zero-mean condition controls the population
component in $H^1$ without a normal trace. At positive times, parabolic
regularity supplies the $H^2$ boundary conditions needed in the
dissipation and in the differentiated energy tests.

\begin{lemma}[Elliptic recovery]\label{lem:recovery}
On a sufficiently small compatible $\mathcal Y$-ball, with $W+\psi\in I_b$,
there are uniform constants $c,C>0$ such that
\begin{equation}\label{eq:energy-equivalence}
 c\|Q\|_{H^3}^2\le\mathcal E\le C\|Q\|_{H^3}^2,
 \qquad \|Q_t\|_{H^1}\le C\mathcal E^{1/2}.
\end{equation}
If $q_t\in H_N^2$ and $\psi_t\in H^2\cap H_0^1$, then $Q\in H^4$ and
\begin{equation}\label{eq:dissipation-recovery}
 \|Q\|_{H^4}+\|Q_t\|_{H^2}\le C\mathcal D^{1/2},
 \qquad \mathcal E\le C_{\rm P}\mathcal D.
\end{equation}
Here $C_{\rm P}$ depends only on the domain.
\end{lemma}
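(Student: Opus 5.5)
The proof runs in the order sketched in the introduction. The population equation gives control of $\Delta q$, and the nutrient equation then recovers derivatives of $\psi$. Smallness of the $\mathcal Y$-ball absorbs all quadratic terms.

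\textbf{Upper bound and $Q_t$ bound.} The upper bound $\mathcal E\le C\|Q\|_{H^3}^2$ is direct from the definitions. For $Q_t=\mathcal F(Q)$, write $q_t=\Pi_0[a\Delta q-U\Theta]$ with $a=\phi(W+\psi)$, and use $\psi_t=D\Delta\psi-U\psi+(W+\psi)\Delta q$. The ingredients are the $W^{3,\infty}$ bounds on $U$ and $W$ from Proposition~\ref{prop:steady}, the $C^5(I_b)$ composition bounds giving $\|a\|_{H^3}+\|\Theta\|_{H^3}\le C$, and $\|\Theta\|_{H^1}\le C\|\psi\|_{H^1}$. Since $H^2\hookrightarrow L^\infty$ in $n\le3$, Sobolev multiplication gives $\|Q_t\|_{H^1}\le C\|Q\|_{H^3}$.

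\textbf{Lower bound $c\|Q\|_{H^3}^2\le\mathcal E$.}
1. Since $\int q=0$, the mean-zero Poincaré inequality applied to $q_t$ gives $\|q_t\|_{H^1}\le C\|\nabla q_t\|_2$.
2. Rewrite $q_t$ as $a\Delta q=q_t+U\Theta+\langle a\Delta q-U\Theta\rangle$ and divide by $a\ge a_b$. With $\|\nabla a\|_\infty\le C$ (from $\psi\in H^3\subset W^{1,\infty}$), this yields
\[
\|\Delta q\|_{H^1}\le C\left(\|\nabla q_t\|_2+\|\psi\|_{H^1}+\|\Delta q\|_2\right)+C\|Q\|_{H^3}\|Q\|_{H^3}.
\]
3. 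The Neumann $H^3$ estimate ($C^4$ boundary) gives $\|q\|_{H^3}\le C(\|\Delta q\|_{H^1}+\|q\|_{H^1})$.
4. Compatibility gives $\psi_t\in H^1_0$, so $\|\psi_t\|_{H^1}\le C\|\nabla\psi_t\|_2$. The second equation then gives $D\Delta\psi=\psi_t+U\psi-(W+\psi)\Delta q$ in $H^1$. Since $\psi\in H^1_0$, the Dirichlet $H^3$ estimate bounds $\|\psi\|_{H^3}$ by $\|\nabla\psi_t\|_2$, $\|\psi\|_{H^1}$ and $\|\Delta q\|_{H^1}$.
5. The terms $\|\Delta q\|_2$ and the lower norms of $\psi$ are not yet controlled by $\mathcal E$, which contains only first derivatives. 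Interpolate, e.g. $\|\Delta q\|_2\le\varepsilon\|q\|_{H^3}+C_\varepsilon\|\nabla q\|_2$, and absorb the $\varepsilon$ term.
6. The quadratic terms are $\le C\|Q\|_{H^3}^2$ on the small ball, and we absorb them into the left side.
This gives $\|Q\|_{H^3}^2\le C\mathcal E+C\|Q\|_{H^3}^4$, hence the claim on a small enough ball.

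\textbf{Dissipation recovery.} This repeats the same scheme one level up, assuming $q_t\in H^2_N$ and $\psi_t\in H^2\cap H^1_0$.
1. Take $\|q_t\|_{H^2}\le C\|\Delta q_t\|_2$ from Neumann regularity with zero mean.
2. Take $\|\psi_t\|_{H^2}\le C\|\Delta\psi_t\|_2$ from Dirichlet regularity.
3. Then $a\Delta q=q_t+U\Theta+\text{const}$ lies in $H^2$. Dividing by $a$ needs $a\in W^{2,3}$-type control, available from $H^3$, and gives $\Delta q\in H^2$. The Neumann $H^4$ estimate yields $\|q\|_{H^4}$.
4. Likewise $D\Delta\psi\in H^2$ gives $\|\psi\|_{H^4}$.
5. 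The lower-order terms are bounded by $\mathcal D$ via $\|\Delta q\|_2,\|\nabla\psi\|_2$ and Poincaré. The products are bounded by $\|Q\|_{H^3}\|Q\|_{H^4}$ and absorbed by smallness.
6. Finally, $\mathcal E\le C_{\rm P}\mathcal D$ follows termwise from Poincaré-type inequalities: $\|\nabla f\|_2^2\le C\|\Delta f\|_2^2$ for $f$ in $H^2_N$ with zero mean, or in $H^2\cap H^1_0$. Note that $q_t$ has zero mean, and $\|\psi\|_2\le C\|\nabla\psi\|_2$. Only the domain enters here.

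\textbf{Main obstacle.} The expected difficulty is step 2 of the lower bound. The expression $a\Delta q$ involves $\nabla a$, which is controlled only through $\psi\in W^{1,\infty}$, i.e. by the quantity being estimated. One must check that every such term is genuinely quadratic (coefficient $\nabla\psi$ or $\Theta$) or lower order, so that smallness and interpolation absorb it uniformly in the steady state.
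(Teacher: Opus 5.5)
Your proposal is correct and follows the paper's proof. You recover $\Delta q$ from the population equation and apply the normalized Neumann estimate, then recover $\Delta\psi$ from the nutrient equation (with compatibility giving $\psi_t\in H_0^1$) and apply the Dirichlet estimate, repeat this one order higher for $\mathcal D$, and get $\mathcal E\le C_{\rm P}\mathcal D$ termwise from Poincar\'e.

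The only difference is how you treat the mean $\langle a\Delta q-U\Theta\rangle$. The paper integrates by parts using $\partial_\nu q=0$ to bound it by $C(\|\nabla q\|_2+\|\psi\|_2)$, which $\mathcal E$ already controls. So your interpolation of $\|\Delta q\|_2$ and the quadratic absorption are valid but unnecessary: the paper's lower bound needs only bounded coefficients on the ball, not extra smallness.
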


\begin{proof}
Fix a compatible ball of radius $\rho\le1$ on which $W+\psi\in I_b$.
Constants may depend on the domain, $D$, the steady coefficient bounds,
$a_b$, and the $C^5(I_b)$ norm of $\phi$, but are independent of the
particular state in this ball.

For $n=2,3$, the embeddings $H^3\hookrightarrow W^{1,\infty}$ and
$H^2\hookrightarrow L^\infty$ give
\begin{equation}\label{eq:products}
 \|fg\|_{H^r}\le C\|f\|_{H^3}\|g\|_{H^r}\quad(0\le r\le3),
 \qquad \|fg\|_2\le C\|f\|_{H^1}\|g\|_{H^1}.
\end{equation}
For the second inequality, H\"older gives
$\|fg\|_2\le\|f\|_4\|g\|_4$, and $H^1\hookrightarrow L^4$
in both dimensions. Composition on the positive nutrient range yields
\begin{equation}\label{eq:coefficient-H3}
 \|a\|_{H^3}+\|a^{-1}\|_{H^3}
 +\|\phi'(W+\psi)\|_{H^3}+\|\phi''(W+\psi)\|_{H^3}\le C.
\end{equation}
The derivatives of $a$ contain derivatives of $W+\psi$ of total order
at most three. The analogous expansion of $\phi''(W+\psi)$ uses at
most five derivatives of $\phi$. Since $a\ge a_b$, applying the same
composition rule to its reciprocal gives the second term in
\eqref{eq:coefficient-H3}.

The coefficient difference vanishes at $\psi=0$. Factoring it as
\[
 \Theta=\psi\int_0^1\phi'(W+\theta\psi)\dd\theta
\]
and using the uniform multiplier bound therefore gives
\begin{equation}\label{eq:Theta-tame}
 \|\Theta\|_{H^r}\le C\|\psi\|_{H^r},\qquad r=0,1,2.
\end{equation}
The coefficient constants depend on the fixed $H^3$ radius.
Using these bounds in \eqref{eq:potential-system}, we obtain
\begin{align*}
 \|q_t\|_{H^1}
 &\le C\bigl(\|a\Delta q\|_{H^1}+\|U\Theta\|_{H^1}\bigr)\\
 &\le C\bigl(\|q\|_{H^3}+\|\psi\|_{H^1}\bigr),\\
 \|\psi_t\|_{H^1}
 &\le D\|\Delta\psi\|_{H^1}+C\|\psi\|_{H^1}
       +\|(W+\psi)\Delta q\|_{H^1}\\
 &\le C\|Q\|_{H^3}.
\end{align*}
Here $\Pi_0$ is bounded on $H^1$. Adding the lower-order terms in
$\mathcal E$ proves its upper bound in
\eqref{eq:energy-equivalence}.

For the reverse inequality, solve the first equation for $\Delta q$,
retaining its mean. Put $f=a\Delta q-U\Theta$ and $c(t)=\langle f\rangle$.
The Neumann condition gives
\[
 c(t)=-\frac1{|\Omega|}\int_\Omega\nabla a\cdot\nabla q
      -\frac1{|\Omega|}\int_\Omega U\Theta.
\]
Consequently
\begin{equation}\label{eq:mean-bound}
\begin{aligned}
 |c(t)|&\le C\bigl(\|\nabla q\|_2+\|\psi\|_2\bigr),\\
 \Delta q&=a^{-1}\bigl(q_t+U\Theta+c(t)\bigr).
\end{aligned}
\end{equation}
The mean-zero conditions for $q,q_t$ and the Dirichlet conditions for
$\psi,\psi_t$ imply
\begin{equation}\label{eq:energy-H1-control}
 \|Q\|_{H^1}+\|Q_t\|_{H^1}\le C\mathcal E^{1/2}.
\end{equation}
Using \eqref{eq:mean-bound} and \eqref{eq:Theta-tame} at order one,
\begin{align*}
 \|\Delta q\|_{H^1}
 &\le C\bigl(\|q_t\|_{H^1}+\|\Theta\|_{H^1}+|c(t)|\bigr)\\
 &\le C\bigl(\|Q_t\|_{H^1}+\|Q\|_{H^1}\bigr)
 \le C\mathcal E^{1/2}.
\end{align*}
The normalized Neumann estimate yields
\begin{equation}\label{eq:recover-q-H3}
 \|q\|_{H^3}\le C\bigl(\|\Delta q\|_{H^1}+\|q\|_2\bigr)
 \le C\mathcal E^{1/2}.
\end{equation}
We next rewrite the nutrient equation as
\begin{equation}\label{eq:nutrient-recovery}
 D\Delta\psi=\psi_t+U\psi-(W+\psi)\Delta q.
\end{equation}
The already established estimate for $\Delta q$ now gives
\begin{align*}
 D\|\Delta\psi\|_{H^1}
 &\le \|\psi_t\|_{H^1}+C\|\psi\|_{H^1}
       +C\|\Delta q\|_{H^1}\\
 &\le C\mathcal E^{1/2}.
\end{align*}
The Dirichlet estimate gives
$\|\psi\|_{H^3}\le C\mathcal E^{1/2}$,
which, together with \eqref{eq:recover-q-H3}, yields the lower bound in
\eqref{eq:energy-equivalence}.

To recover the dissipation norm, assume $q_t\in H_N^2$ and
$\psi_t\in H^2\cap H_0^1$. The Laplacian terms in $\mathcal D$ and
the scalar elliptic estimates give
\begin{equation}\label{eq:dissipation-H2-control}
 \|Q\|_{H^2}+\|Q_t\|_{H^2}\le C\mathcal D^{1/2}.
\end{equation}
With the coefficient bounds already obtained in $H^3$, the
distributional identity \eqref{eq:mean-bound} gives
\begin{align*}
 \|\Delta q\|_{H^2}
 &\le C\bigl(\|q_t\|_{H^2}+\|\Theta\|_{H^2}+|c(t)|\bigr)\\
 &\le C\bigl(\|Q_t\|_{H^2}+\|Q\|_{H^2}\bigr)
 \le C\mathcal D^{1/2}.
\end{align*}
Multiplication by $a^{-1}\in H^3$ is bounded on $H^2$, so this
calculation proves $\Delta q\in H^2$ before the Neumann estimate is
used to conclude
\[
 \|q\|_{H^4}\le C\bigl(\|\Delta q\|_{H^2}+\|q\|_2\bigr)
 \le C\mathcal D^{1/2}.
\]
Equation \eqref{eq:nutrient-recovery} then gives, in the same order,
\[
\begin{aligned}
 D\|\Delta\psi\|_{H^2}
 &\le\|\psi_t\|_{H^2}+C\|\psi\|_{H^2}
       +C\|\Delta q\|_{H^2}\\
 &\le C\mathcal D^{1/2},
\end{aligned}
\]
and hence $\|\psi\|_{H^4}\le C\mathcal D^{1/2}$.
The scalar estimates at orders one and two are valid on the $C^4$
domain \cite{AgmonDouglisNirenberg,Brezis2011}.

Finally, the inequality $\mathcal E\le C_{\rm P}\mathcal D$ follows
directly from the boundary conditions. For a mean-zero Neumann function
or a Dirichlet function $f\in H^2$, integration by parts and Poincar\'e
give
\[
 \|\nabla f\|_2^2=-\int f\Delta f
 \le C\|\nabla f\|_2\|\Delta f\|_2.
\]
Apply this to the four components of $Q,Q_t$, and use
$\|\psi\|_2\le C\|\nabla\psi\|_2$ for the remaining term in $E_0$.
The resulting constant $C_{\rm P}$ depends only on the domain.
\end{proof}

\subsection{The high-order energy inequality}\label{sec:high-energy}
\begin{proposition}\label{prop:energy}
There exist $\rho,c_{\rm en},C_{\rm rem}>0$ such that every solution
in \eqref{eq:strong-class}, while $\|Q\|_{H^3}<\rho$ and
$W+\psi\in I_b$, satisfies
\begin{equation}\label{eq:high-energy}
 \mathcal E'+c_{\rm en}\mathcal D
 \le C_{\rm rem}\mathcal E^{1/2}\mathcal D.
\end{equation}
\end{proposition}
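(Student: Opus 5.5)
The energy $\mathcal E$ in \eqref{eq:energy-dissipation} is a sum of four blocks: the low-order pair $E_0$, the gradient energies of $(q,\psi)$, and the gradient energies of $(q_t,\psi_t)$. The plan is to derive a differential inequality for each block, add them, and absorb every linear coupling using $D>D_3(m)$. Every genuinely nonlinear remainder will then be bounded by $C\mathcal E^{1/2}\mathcal D$ through Lemma~\ref{lem:recovery}.

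The low-order block is already handled by \eqref{eq:low-energy}, which gives
\[
\tfrac12E_0'+\tfrac{a_b}{2}\|\Delta q\|_2^2+(D-D_{\rm en})\|\nabla\psi\|_2^2\le0 .
\]

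For the state gradients, I would test the $q$-equation of \eqref{eq:potential-system} with $-\Delta q$ (the mean $\Pi_0$ drops out) and the $\psi$-equation with $-\Delta\psi$. The Dirichlet condition gives $\psi_t|_{\partial\Omega}=0$, so this test is legitimate. The result is
\[
\tfrac12\tfrac{\dd}{\dd t}\bigl(\|\nabla q\|_2^2+\|\nabla\psi\|_2^2\bigr)+\int a|\Delta q|^2+D\|\Delta\psi\|_2^2
=\int U\Theta\Delta q+\int\bigl(U\psi-(W+\psi)\Delta q\bigr)\Delta\psi .
\]
I would split $\Theta=\phi'(W)\psi+O(\psi^2)$. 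The linear parts are then estimated by Young against $\tfrac{a_b}{2}\|\Delta q\|_2^2$, using $\|\psi\|_2\le\lambda_1^{-1}\|\Delta\psi\|_2$ and $U\le M$. This yields coefficients of $\|\Delta\psi\|_2^2$ of the form $\frac{2}{a_b}(ML_b/\lambda_1)^2$, $\frac2{a_b}(5b/4)^2$ and $M/\lambda_1$, which are precisely the three pieces of $K(\mu)$. Since $D>4K$, they are absorbed with room to spare. The quadratic parts such as $\psi\Delta q\Delta\psi$ and $\psi^2\Delta q$ are bounded by $\|\psi\|_\infty\|\Delta q\|_2\|\Delta\psi\|_2\le C\mathcal E^{1/2}\mathcal D$.

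For the time-derivative block, I would differentiate \eqref{eq:potential-system} in time:
\[
q_{tt}=\Pi_0\bigl[a\Delta q_t+a_t\Delta q-U\Theta_t\bigr],\qquad
\psi_{tt}=D\Delta\psi_t-U\psi_t+(W+\psi)\Delta q_t+\psi_t\Delta q .
\]
These are tested with $-\Delta q_t$ and $-\Delta\psi_t$, which is justified at a.e.\ positive time by \eqref{eq:strong-class} together with $q_t\in H^2_N$ and $\psi_t\in H^2\cap H^1_0$. The principal terms $\int a|\Delta q_t|^2\ge a_b\|\Delta q_t\|_2^2$ and $D\|\Delta\psi_t\|_2^2$ have the same structure as before. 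The linear couplings $\int U\phi'(W)\psi_t\Delta q_t$ and $\int W\Delta q_t\Delta\psi_t$ are handled identically and again absorbed via $D>4K$. The nonlinear terms are handled as follows:
\begin{itemize}
\item $a_t\Delta q\,\Delta q_t$ with $a_t=\phi'(w)\psi_t$, bounded by $\|\psi_t\|_4\|\Delta q\|_4\|\Delta q_t\|_2$;
\item $(\Theta_t-\phi'(W)\psi_t)\Delta q_t$;
\item $\psi_t\Delta q\,\Delta\psi_t$;
\item $\psi\Delta q_t\Delta\psi_t$.
\end{itemize}
Each contains one factor controlled in $H^1$ or $L^\infty$ by $\mathcal E^{1/2}$, by \eqref{eq:energy-equivalence}, and two factors controlled in $L^2$ or $L^4$ by $\mathcal D^{1/2}$, by \eqref{eq:dissipation-recovery} and \eqref{eq:products}. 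The $\Pi_0$ mean disappears because $\int\Delta q_t=0$.

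Finally, I would add the three blocks. The coercive terms give $\ge c\,\mathcal D$ once each absorption leaves half of $a_b$ and of $D-4K$. Here $\|\Delta q\|_2^2$ appears with weight two and $\|\nabla\psi\|_2^2$ is controlled by $\|\Delta\psi\|_2^2$, so $\mathcal D$ is fully covered. Choosing $\rho$ small enough that Lemma~\ref{lem:recovery} applies gives \eqref{eq:high-energy}.

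The main obstacle is the time-differentiated block, in two respects. First, the structure of the linear coupling constants must match $K$ exactly, so that a single threshold $4K$ suffices for both the state and the $t$-derivative levels. Second, terms like $a_t\Delta q\,\Delta q_t$ must be estimated without losing derivatives on curved boundaries. I would resolve this by never differentiating in space and instead relying on the elliptic recovery $\|Q\|_{H^4}\le C\mathcal D^{1/2}$ to place $\Delta q$ in $H^1\subset L^4$.
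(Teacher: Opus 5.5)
Your proposal is correct and follows the paper's proof. It uses the same three blocks, the same tests with $-\Delta q,-\Delta\psi$ and $-\Delta q_t,-\Delta\psi_t$, absorption via $D>4K$, and the $\psi_t\Delta q$ products bounded through $H^1\hookrightarrow L^4$, elliptic recovery and $\mathcal E\le C_{\rm P}\mathcal D$. The only minor difference is that the paper does not linearize the coefficients at $W$. It bounds $|\Theta|\le L_b|\psi|$, $|\phi'(w)|\le L_b$ and $|w|\le 5b/4$ directly on $I_b$, which is where the $5b/4$ in $K$ comes from. So its state-level inequality has no remainder at all, and only the two $\psi_t\Delta q$ terms remain at the time level. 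Your extra $O(\psi)$ remainders are harmless but unnecessary.
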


\begin{proof}
Choose the radius so that $\|\psi\|_\infty\le b/4$.
The stationary bound $b/2\le W\le b$ then keeps $w=W+\psi$ in $I_b$.

\emph{The state equation.}
Test the two equations in \eqref{eq:potential-system} with
$-\Delta q$ and $-\Delta\psi$, respectively. The projected mean
vanishes because $\int_\Omega\Delta q=0$. Adding the identities gives
\begin{equation}\label{eq:high-identity}
\begin{aligned}
 &\frac12\frac{\dd}{\dd t}
       \bigl(\|\nabla q\|_2^2+\|\nabla\psi\|_2^2\bigr)
       +\int_\Omega a|\Delta q|^2+D\|\Delta\psi\|_2^2\\
 &\qquad=\int_\Omega U\Theta\Delta q
          +\int_\Omega U\psi\Delta\psi
          -\int_\Omega w\Delta q\Delta\psi.
\end{aligned}
\end{equation}
For $f\in H^2\cap H_0^1$, the Dirichlet eigenvalue inequality gives
$\|f\|_2\le\lambda_1^{-1}\|\Delta f\|_2$.
Together with $|\Theta|\le L_b|\psi|$ and
$\|U\|_\infty\le M(m)$, it yields
\begin{align*}
 \left|\int_\Omega U\Theta\Delta q\right|
 &\le M(m)L_b\|\psi\|_2\|\Delta q\|_2\\
 &\le \frac{M(m)L_b}{\lambda_1}
                  \|\Delta\psi\|_2\|\Delta q\|_2\\
 &\le\frac{a_b}{8}\|\Delta q\|_2^2
       +\frac{2}{a_b}\left(\frac{M(m)L_b}{\lambda_1}\right)^2
                         \|\Delta\psi\|_2^2.
\end{align*}
The other two terms are bounded by
\begin{align*}
 \left|\int_\Omega U\psi\Delta\psi\right|
 &\le\frac{M(m)}{\lambda_1}\|\Delta\psi\|_2^2,\\
 \left|\int_\Omega w\Delta q\Delta\psi\right|
 &\le\frac{5b}{4}\|\Delta q\|_2\|\Delta\psi\|_2\\
 &\le\frac{a_b}{8}\|\Delta q\|_2^2
       +\frac{2}{a_b}\left(\frac{5b}{4}\right)^2\|\Delta\psi\|_2^2.
\end{align*}
The coefficients of $\|\Delta\psi\|_2^2$ sum to $K(m)$.
Since $D>4K(m)$, \eqref{eq:high-identity} implies
\begin{equation}\label{eq:state-gradient-energy}
 \frac12\frac{\dd}{\dd t}\|\nabla Q\|_2^2
       +\frac{3a_b}{4}\|\Delta q\|_2^2
       +\frac{3D}{4}\|\Delta\psi\|_2^2\le0.
\end{equation}
\emph{The time-differentiated equation.}
The equilibrium is independent of time, and
$a_t=\Theta_t=\phi'(w)\psi_t$. Hence
\begin{equation}\label{eq:time-levels}
\begin{aligned}
 q_{tt}&=\Pi_0\bigl[a\Delta q_t-U\phi'(w)\psi_t
                              +\phi'(w)\psi_t\Delta q\bigr],\\
 \psi_{tt}&=D\Delta\psi_t-U\psi_t+w\Delta q_t+\psi_t\Delta q.
\end{aligned}
\end{equation}
Test these equations with $-\Delta q_t$ and $-\Delta\psi_t$. We obtain
\begin{equation}\label{eq:time-gradient-identity}
\begin{aligned}
 &\frac12\frac{\dd}{\dd t}\|\nabla Q_t\|_2^2
              +\int_\Omega a|\Delta q_t|^2+D\|\Delta\psi_t\|_2^2\\
 &\quad=\int_\Omega U\phi'(w)\psi_t\Delta q_t
        +\int_\Omega U\psi_t\Delta\psi_t
        -\int_\Omega w\Delta q_t\Delta\psi_t\\
 &\qquad\quad-\int_\Omega\phi'(w)\psi_t\Delta q\,\Delta q_t
              -\int_\Omega\psi_t\Delta q\,\Delta\psi_t.
\end{aligned}
\end{equation}
The first three integrals are estimated as above. Here
$|\phi'(w)\psi_t|\le L_b|\psi_t|$, and the zero Dirichlet trace of
$\psi_t$ allows the same use of $\lambda_1$, leaving the coefficients
$3a_b/4$ and $3D/4$ in the two dissipative terms.

For the last two integrals, H\"older's inequality and
$H^1\hookrightarrow L^4$ give
\begin{align*}
 \|\psi_t\Delta q\|_2
 &\le\|\psi_t\|_4\|\Delta q\|_4\\
 &\le C\|\psi_t\|_{H^1}\|\Delta q\|_{H^1}\\
 &\le C\|Q_t\|_{H^1}\|Q\|_{H^3}
 \le C\mathcal E.
\end{align*}
By Lemma~\ref{lem:recovery}, pairing with the Laplacians in
\eqref{eq:time-gradient-identity} gives
\begin{align*}
 &\left|\int_\Omega\phi'(w)\psi_t\Delta q\,\Delta q_t\right|
       +\left|\int_\Omega\psi_t\Delta q\,\Delta\psi_t\right|\\*
 &\qquad\le C\|\psi_t\Delta q\|_2
                   \bigl(\|\Delta q_t\|_2+\|\Delta\psi_t\|_2\bigr)\\*
 &\qquad\le C\mathcal E\mathcal D^{1/2}
 \le C\mathcal E^{1/2}\mathcal D.
\end{align*}
In the final inequality we used $\mathcal E\le C_{\rm P}\mathcal D$.
Consequently,
\begin{equation}\label{eq:time-gradient-energy}
 \frac12\frac{\dd}{\dd t}\|\nabla Q_t\|_2^2
       +\frac{3a_b}{4}\|\Delta q_t\|_2^2
       +\frac{3D}{4}\|\Delta\psi_t\|_2^2
 \le C\mathcal E^{1/2}\mathcal D.
\end{equation}

Adding \eqref{eq:low-energy}, \eqref{eq:state-gradient-energy}, and
\eqref{eq:time-gradient-energy} proves \eqref{eq:high-energy}.
For example, one may take
\[
 c_{\rm en}=2\min\left\{\frac{a_b}{2},
             D-D_{\rm en}(m),\frac{3a_b}{4},\frac{3D}{4}\right\}>0
\]
and increase $C_{\rm rem}$ accordingly, with both constants independent
of the time interval.

The regularity in \eqref{eq:strong-class} gives
$a\Delta q_t\in L^2L^2$. For the other differentiated product,
\[
 \|\psi_t\Delta q\|_{L^2L^2}
 \le C\|\psi_t\|_{L^2H^1}\|q\|_{C H^3}.
\]
Time difference quotients therefore give \eqref{eq:time-levels}.
For each $f\in\{q,\psi,q_t,\psi_t\}$, the solution class gives
\[
 f\in H^1(0,T;L^2)\cap L^2(0,T;H^2)
\]
with the Neumann or Dirichlet condition at almost every time.
The corresponding energy identity is
\[
 (f_t,-\Delta f)_{L^2}
       =\frac12\frac{\dd}{\dd t}\|\nabla f\|_2^2.
\]
The identity holds with initial traces in $H^1$ for the Neumann
component and $H_0^1$ for the Dirichlet component.
\end{proof}

\section{Global solutions and the selected equilibrium}\label{sec:global}
We construct solutions in the energy space and then use
\eqref{eq:high-energy} to continue them globally.

\begin{lemma}[Construction in the compatible energy space]\label{lem:local}
For a fixed positive steady state there are $T_{\rm loc},\rho_{\rm loc}>0$
such that every compatible $Q_0\in\mathcal Y$ with
$\|Q_0\|_{H^3}<\rho_{\rm loc}$ generates a unique solution of
\eqref{eq:potential-system} on $[0,T_{\rm loc}]$ in
\eqref{eq:strong-class}. The solution map is locally Lipschitz in these
norms. Compatibility is preserved, and a maximal solution satisfies
\begin{equation}\label{eq:continuation}
 \sup_{t<T_{\max}}\|Q(t)\|_{H^3}<\rho_{\rm loc}/2
 \quad\Longrightarrow\quad T_{\max}=\infty.
\end{equation}
The constants are uniform on compact families of steady states with a
common positive nutrient bound and the coefficient bounds of
Proposition~\ref{prop:steady}.
\end{lemma}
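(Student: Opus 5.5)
We construct the local solution by a Galerkin-free linearization and contraction in the strong class \eqref{eq:strong-class}, using the energy structure of Section~\ref{sec:energy} for the a priori bounds.

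\textbf{Linear problem.} Freeze coefficients. Given $\bar Q=(\bar q,\bar\psi)$ in a ball of the class \eqref{eq:strong-class} with $W+\bar\psi\in I_b$, set $\bar a=\phi(W+\bar\psi)$ and $\bar\Theta=\phi(W+\bar\psi)-\phi(W)$. Solve the linear system
\[
 q_t=\Pi_0[\bar a\Delta q-U\bar\Theta],\qquad
 \psi_t=D\Delta\psi-U\psi+(W+\bar\psi)\Delta q,
\]
with $\partial_\nu q=0$, $\int q=0$, $\psi|_{\partial\Omega}=0$ and $Q(0)=Q_0$. This system is triangular in its principal part. The $q$-equation is a uniformly parabolic Neumann problem, since $\bar a\ge a_b$ and $\bar a\in C H^3\cap H^1 H^1$ by \eqref{eq:coefficient-H3}. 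Once $q$ is known, the $\psi$-equation is a Dirichlet heat equation with source $(W+\bar\psi)\Delta q$. Existence at the $L^2H^2$ level is standard, for instance by Galerkin approximation in Neumann and Dirichlet eigenfunctions. The higher regularity comes from the argument of Proposition~\ref{prop:energy}, now applied to the linear problem:
\begin{itemize}
\item differentiate in time;
\item test the state equations with $-\Delta q,-\Delta\psi$ and the time-differentiated equations with $-\Delta q_t,-\Delta\psi_t$;
\item use Lemma~\ref{lem:recovery}, with frozen coefficients, to turn $\mathcal E,\mathcal D$ into $H^3$ and $H^4$ bounds.
\end{itemize}
On $[0,T]$ this yields
\[
 \sup_t\mathcal E+\int_0^T\mathcal D\le C\bigl(\mathcal E(0)+T\cdot(\text{data})\bigr).
\]

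\textbf{Main obstacle: the time-derivative level at $t=0$.} The initial value $Q_t(0)=\mathcal F(Q_0)$ lies only in $H^1$. Its $q$-component has no normal trace, and its $\psi$-component lies in $H_0^1$ exactly because of compatibility \eqref{eq:potential-compatibility}. To handle this, regularize the data: approximate $Q_0$ in $H^3$ by smoother compatible data. These should also satisfy the first-order Neumann compatibility for $q_t$, chosen so that $q_t(0)\in H_N^2$, since the compatibility condition is linear and the added boundary correction can be made small by a lifting. For such data the differentiated problem has strong solutions and the energy identities at the end of the proof of Proposition~\ref{prop:energy} are justified. The bounds depend only on $\|Q_t(0)\|_{H^1}\le C\|Q_0\|_{H^3}$, so we pass to the limit. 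Continuity $Q_t\in CH^1$ and attainment of the initial energy then follow from the energy identity together with weak continuity.

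\textbf{Contraction.} Define the map $\bar Q\mapsto Q$. For small $\rho_{\rm loc}$ and $T_{\rm loc}$ it maps a ball $\{\sup\mathcal E\le 2C\mathcal E(0)\}$ into itself. Smallness keeps $\|\psi\|_\infty\le b/4$ through $H^2\hookrightarrow L^\infty$, hence $W+\bar\psi\in I_b$. The map contracts in the weaker norm $C H^1\cap L^2H^2$. The differences $\bar a_1-\bar a_2$ and $\bar\Theta_1-\bar\Theta_2$ are bounded by $C|\bar\psi_1-\bar\psi_2|$ times $H^3$ multipliers via \eqref{eq:products}, and the low-order identity \eqref{eq:low-identity} handles the principal term. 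The ball is closed in the weak norm by lower semicontinuity, so the fixed point lies in the strong class. Uniqueness and local Lipschitz dependence follow from the same difference estimate. Lipschitz dependence in the strong norms follows by applying the higher-level linear estimate to differences.

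\textbf{Compatibility and continuation.} Compatibility is preserved because $\psi_t|_{\partial\Omega}=0$ for a.e.\ $t$ and $\psi_t\in CH^1$, so the trace vanishes for all $t$. For \eqref{eq:continuation}: $T_{\rm loc}$ depends only on $\rho_{\rm loc}$ and the coefficient bounds. If $\|Q(t)\|_{H^3}<\rho_{\rm loc}/2$ up to $T_{\max}$, restarting from times close to $T_{\max}$ extends the solution past $T_{\max}$. All constants depend only on the domain, $D$, $a_b$, the $C^5(I_b)$ norm of $\phi$, and the steady bounds of Proposition~\ref{prop:steady}, which gives the stated uniformity.
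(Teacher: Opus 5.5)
Your outline is a workable alternative, but it takes a different route from the paper.

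\textbf{Your route.} You use the classical quasilinear scheme:
\begin{itemize}
\item freeze the coefficients at the iterate $\bar Q$ and build the linear solution by Galerkin;
\item get the $\mathcal X_T$-level bounds by differentiating in time, justified through regularized data that carry an extra first-order Neumann compatibility;
\item contract in the weaker norm $CH^1\cap L^2H^2$ on a strongly bounded set, which is closed under weak lower semicontinuity.
\end{itemize}

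\textbf{The paper's route.} The paper freezes only the principal coefficient, at the time-independent value $a_*=\phi(W)$. It moves $(a-a_*)\Delta q$, $U\Theta$, $\psi\Delta q$ and $U\psi$ into a forcing $G\in\mathcal G_T=H^1(0,T;L^2)\cap L^2(0,T;H^2)$. It then solves directly for $Q_t=(r,s)$ by the two successive scalar problems of Lemma~\ref{lem:scalar-frozen}, with initial datum $G(0)-\mathcal A_*Q_0\in H^1\times H_0^1$. Next it sets $Q=Q_0+\int_0^t(r,s)$ and recovers $Q\in CH^3\cap L^2H^4$ by Neumann and Dirichlet elliptic estimates. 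Since $\|a-a_*\|_{CH^3}\le CR$, the remainder is Lipschitz on $\mathcal G_T$ with constant $C(R+\sqrt T)$. The contraction therefore runs in the full norm $\mathcal X_T$, and strong-norm Lipschitz dependence follows at once.

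\textbf{Why your main obstacle is not one.} $H^1$ is the form domain of the Neumann realization of $-a_*\Delta$. So the initial trace space of $H^1(0,T;L^2)\cap L^2(0,T;H^2_N)$ is $H^1$ with no boundary condition, just as $H_0^1$ is for the Dirichlet part. The missing normal trace of $q_t(0)$ is therefore harmless. Neither the additional compatibility condition, nor the third-order boundary lifting on a $C^4$ domain, nor the limiting argument is needed.

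\textbf{Trade-offs.} Your scheme is the standard one and does not need $\phi(W+\psi)-\phi(W)$ to be small. It pays for this with:
\begin{itemize}
\item time-dependent coefficients;
\item the data regularization;
\item a separate argument that the weak-norm fixed point lies in $C([0,T];H^3)$ with $Q_t\in C([0,T];H^1)$;
\item a second, $\mathcal X_T$-level difference estimate for strong Lipschitz dependence.
\end{itemize}
The paper uses closeness to the steady state to keep the linear operator autonomous and triangular, and works entirely in $\mathcal X_T$.

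\textbf{A minor point.} To contract in $CH^1\cap L^2H^2$, test both difference equations with the negative Laplacian, as in \eqref{eq:high-identity}. The identity \eqref{eq:low-identity} tests the nutrient equation with $\psi$ and controls that component only in $CL^2\cap L^2H^1$; that weaker norm would also suffice.
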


Appendix~\ref{app:construction} proves the lemma by solving two scalar
parabolic problems for the first time derivatives and recovering the
state by elliptic estimates. The common local existence time allows
a restart near any finite endpoint as long as the state stays below
$\rho_{\rm loc}/2$.

\subsection{Fixed-mass stability}
Choose $C_0\ge1$ so that the energy equivalence reads
\[
 C_0^{-1}\|Q\|_{H^3}^2\le\mathcal E\le C_0\|Q\|_{H^3}^2
\]
on the ball in Proposition~\ref{prop:energy}.
Choose an energy level $\eta>0$ such that
\begin{equation}\label{eq:small-radius}
 \sqrt{C_0}\eta<\min\{\rho,\rho_{\rm loc}/2\},
 \qquad C_{\rm rem}\eta<\frac{c_{\rm en}}2.
\end{equation}
Then decrease $\eta$, if necessary, so that Sobolev embedding gives
$\|\psi\|_\infty<b/4$ whenever $\|Q\|_{H^3}\le\sqrt{C_0}\eta$.
Choose the data radius $\delta_3>0$ with
\[
 C_0\delta_3^2<\frac{\eta^2}{4}.
\]

For compatible $\|Q_0\|_{H^3}<\delta_3$, Lemma~\ref{lem:local}
provides a solution. Its energy is continuous, since
$Q\in C H^3$ and $Q_t\in C H^1$, and
$\mathcal E(0)<\eta^2/4$. Consider the maximal time interval on
which the state remains in the energy ball and
$\mathcal E(t)^{1/2}<\eta$. On this interval,
\[
 C_{\rm rem}\mathcal E^{1/2}\mathcal D
       \le\frac{c_{\rm en}}2\mathcal D.
\]
The high-order estimate therefore yields
\begin{equation}\label{eq:absorbed}
\begin{aligned}
 \mathcal E'+\frac{c_{\rm en}}2\mathcal D&\le0,\\
 \mathcal E(t)+\frac{c_{\rm en}}2\int_0^t\mathcal D(s)\dd s
                                      &\le\mathcal E(0).
\end{aligned}
\end{equation}
The second inequality follows by integration first from a positive
time and then by passage to zero, using continuity of the energy.
It gives $\mathcal E(t)^{1/2}<\eta/2$. The lower energy bound now
keeps $\|Q(t)\|_{H^3}<\sqrt{C_0}\eta/2$, strictly inside both the
energy ball and the continuation radius. Continuity therefore extends
the estimate to the full maximal interval, and
\eqref{eq:continuation} gives global existence.

Since $\mathcal E\le C_{\rm P}\mathcal D$, the first inequality in
\eqref{eq:absorbed} also gives
\[
 \mathcal E'+\frac{c_{\rm en}}{2C_{\rm P}}\mathcal E\le0,
 \qquad
 \mathcal E(t)\le\mathcal E(0)
                e^{-c_{\rm en}t/(2C_{\rm P})}.
\]
Taking square roots and using the energy equivalence on both sides,
we obtain
\begin{equation}\label{eq:fixed-decay}
 \|Q(t)\|_{H^3}\le C_0e^{-c_{\rm en}t/(4C_{\rm P})}
                         \|Q_0\|_{H^3}.
\end{equation}
The integrated estimate in \eqref{eq:absorbed}, together with elliptic
recovery at the dissipation level, gives
\begin{equation}\label{eq:global-dissipation-bound}
 \int_0^\infty
 \bigl(\|Q(t)\|_{H^4}^2+\|Q_t(t)\|_{H^2}^2\bigr)\dd t
 \le C\|Q_0\|_{H^3}^2.
\end{equation}

\subsection{Recovery of the physical solution}

Reconstruct $u=U-\Delta q$, $w=W+\psi$. If
$\mathcal J=-a\Delta q+U\Theta=u\phi(w)-Z$, then
$q_t=-\mathcal J+\langle\mathcal J\rangle$; applying $-\Delta$ recovers
the population equation, and the stationary nutrient identity recovers
the second equation in \eqref{eq:model}. At almost every positive time,
this calculation reads explicitly
\begin{align*}
 u_t&=-\Delta q_t=\Delta\mathcal J=\Delta(u\phi(w)),\\
 w_t&=D\Delta\psi-U\psi+w\Delta q\\
    &=D\Delta w-D\Delta W-U(w-W)+w(U-u)
     =D\Delta w-uw.
\end{align*}
The steady identity $D\Delta W=UW$ is used in the last equality.
Furthermore,
$\mathcal J\in H^2$ and $\partial_\nu\mathcal J=-\partial_\nu q_t=0$.
In particular $u_t\in L^2(0,T;L^2)$ and
$u\phi(w)\in L^2(0,T;H^2)$, with the zero-flux trace.
The Neumann condition gives mass conservation:
\[
 \int_\Omega(u-U)=-\int_\Omega\Delta q
 =-\int_{\partial\Omega}\partial_\nu q=0.
\]
The nutrient condition follows from $\psi=0$. For the population flux,
the identity $u\phi(w)=Z-q_t+\langle\mathcal J\rangle$ gives
\[
 \partial_\nu(u\phi(w))=0
 \quad\hbox{in }L^2(0,T;H^{1/2}(\partial\Omega)).
\]
Normalized Neumann regularity gives
\begin{equation}\label{eq:physical-norm}
 \|Q\|_{H^3}\asymp\|u-U\|_{H^1}+\|w-W\|_{H^3}.
\end{equation}
Conversely, a strong physical solution gives the same normalized potential
system, so uniqueness and local Lipschitz dependence transfer to these
variables. Finitely many local steps cover every finite time interval.

\subsection{Uniformity in mass and proof of Theorem~\ref{thm:main}}
The functions $D_{\rm pos}$, $D_{\rm en}$, and $K$ are continuous
in mass. Since $D>D_3(m)$, choose $0<r_m<m/2$ so that
\[
 D-D_3(\mu)\ge\frac{D-D_3(m)}2>0
       \qquad\hbox{for }\mu\in I_m=[m-r_m,m+r_m].
\]
Proposition~\ref{prop:steady} gives common positive nutrient bounds
and uniform $W^{3,\infty}$ coefficient bounds on this interval.
The constants in elliptic recovery, construction, and the energy
estimate may therefore be chosen uniformly. The choices of $\eta$
and $\delta_3$ in the preceding argument give a common data radius
and decay rate for every $\mu\in I_m$.

For data near $(U_m,W_m)$, Cauchy--Schwarz gives
$|\mu_0-m|\le|\Omega|^{1/2}\|u_0-U_m\|_2\le Cd_0$.
Choose $\delta$ to ensure $\mu_0\in I_m$. The steady Lipschitz estimates
and the normalized Neumann inverse at mass $\mu_0$ then give
\eqref{eq:recentering}. The inverse is applied to $u_0-U_{\mu_0}$,
which has zero mean. More explicitly, the profile estimates yield
\begin{align*}
 \|u_0-U_{\mu_0}\|_{H^1}
 &\le\|u_0-U_m\|_{H^1}+C|\mu_0-m|,\\
 \|w_0-W_{\mu_0}\|_{H^3}
 &\le\|w_0-W_m\|_{H^3}+C|\mu_0-m|.
\end{align*}
Applying the normalized Neumann inverse to the first difference gives
$\|Q_0\|_{H^3}\le Cd_0$. The constant is uniform because the domain
is fixed and the profile estimates are uniform on $I_m$.
The nutrient compatibility condition also survives the change of center:
\[
 (D\Delta\psi_0+b\Delta q_0)|_{\partial\Omega}
 =(D\Delta w_0-bu_0)|_{\partial\Omega},
\]
using $D\Delta W_{\mu_0}=U_{\mu_0}W_{\mu_0}$ and
$W_{\mu_0}|_{\partial\Omega}=b$.
After reducing $\delta$ so that $\|Q_0\|_{H^3}<\delta_3$ uniformly,
the fixed-mass result proves \eqref{eq:physical-decay} and
\eqref{eq:potential-decay}. Adding the steady profile difference proves
\eqref{eq:mass-selection-reference-offset}. The mass functional is
Lipschitz in $L^2$, so Proposition~\ref{prop:steady} also proves the
claimed Lipschitz dependence of the selected equilibrium.

If the masses differ, convergence to $(U_m,W_m)$ is impossible even
in $L^2$ for the population, since
\[
 |\mu_0-m|=\left|\int_\Omega(u(t)-U_m)\right|
 \le |\Omega|^{1/2}\|u(t)-U_m\|_2.
\]
The offset in \eqref{eq:mass-selection-reference-offset} is therefore
forced by conservation.

\subsection{Dependence on the data and nonnegativity}
We now compare solutions whose initial data have different nearby masses. Let $Q_i=(q_i,\psi_i)$ be the potential
solution centered at $(U_i,W_i)=(U_{\mu_i},W_{\mu_i})$, $i=1,2$.
Set $p=q_1-q_2$, $\chi=\psi_1-\psi_2$, and write
$w_i=W_i+\psi_i$, $a_i=\phi(w_i)$,
$\Theta_i=\phi(w_i)-\phi(W_i)$. Subtracting the equations gives
\begin{equation}\label{eq:difference-system}
\begin{aligned}
 p_t={}&\Pi_0\bigl[a_1\Delta p-U_1(\Theta_1-\Theta_2)\bigr]\\
       &+\Pi_0\bigl[(a_1-a_2)\Delta q_2-(U_1-U_2)\Theta_2\bigr],\\
 \chi_t={}&D\Delta\chi-U_1\chi+w_1\Delta p\\
       &-(U_1-U_2)\psi_2+(w_1-w_2)\Delta q_2.
\end{aligned}
\end{equation}
Both differences satisfy the same homogeneous boundary conditions as
the potentials, and $\int p=0$.

The steady-profile estimate gives
\[
 \|U_1-U_2\|_{W^{3,\infty}}
 +\|W_1-W_2\|_{W^{3,\infty}}\le C|\mu_1-\mu_2|.
\]
To keep the coefficient differences linear in these quantities, use
\[
 a_1-a_2=(w_1-w_2)\int_0^1
 \phi'(w_2+\theta(w_1-w_2))\dd\theta,
 \qquad w_1-w_2=\chi+W_1-W_2.
\]
For the other coefficient difference, factoring out the perturbation
before subtraction gives
\begin{align*}
 \Theta_1-\Theta_2
 ={}&\chi\int_0^1\phi'(W_1+\theta\psi_1)\dd\theta\\
    &+\psi_2\int_0^1
       [\phi'(W_1+\theta\psi_1)-\phi'(W_2+\theta\psi_2)]\dd\theta.
\end{align*}
In the second integral, the argument difference is
$W_1-W_2+\theta\chi$. The first term is bounded by the mass
difference; the second contributes to the estimate for $\chi$.
For $r=1,2$, the product and composition estimates therefore give
\begin{align*}
 \|a_1-a_2\|_{H^r}
 &\le C\bigl(\|\chi\|_{H^r}+|\mu_1-\mu_2|\bigr),\\
 \|\Theta_1-\Theta_2\|_{H^r}
 &\le C\|\chi\|_{H^r}
       +C|\mu_1-\mu_2|\|\psi_2\|_{H^r}.
\end{align*}
The constants are common to the small solution ball. In deriving the
second estimate, terms containing $\chi$ and derivatives of $\psi_2$
are bounded by the same product estimates as in
Lemma~\ref{lem:recovery}. Differentiation in time uses
$\partial_tW_i=\partial_tU_i=0$ and the bounds for $Q_{i,t}$.

Let $\mathcal X_T$ denote the sum of the norms in
\eqref{eq:strong-class}, as in Appendix~\ref{app:construction}.
Write $\mathcal A_i,\mathcal N_i$ for the frozen operator and remainder
in \eqref{eq:frozen} at $(U_i,W_i)$, $i=1,2$.
On a common local interval, write both equations using $\mathcal A_1$. The additional linear forcing in the second
solution contains the steady coefficient differences multiplied by
$\Delta q_2$, $\psi_2$, or their time derivatives. Its norm is controlled by the mass difference as follows.
For example, let $c=\phi(W_1)-\phi(W_2)$ and
$F=\Pi_0[c\Delta q_2]$. The profile bounds imply
$\|c\|_{W^{2,\infty}}\le C|\mu_1-\mu_2|$, and $c$ is independent
of time. Thus
\begin{equation}\label{eq:mass-forcing-bound}
\begin{aligned}
 \|F\|_{L^2H^2}&\le C|\mu_1-\mu_2|\,\|q_2\|_{L^2H^4},\\
 \|F_t\|_{L^2L^2}&\le C|\mu_1-\mu_2|\,\|q_{2,t}\|_{L^2H^2},\\
 \|F(0)\|_{H^1}&\le C|\mu_1-\mu_2|\,\|q_2(0)\|_{H^3}.
\end{aligned}
\end{equation}
The $L^2L^2$ part of the forcing norm is controlled by the $L^2H^2$
estimate. The products $(W_1-W_2)\Delta q_2$ and $(U_1-U_2)\psi_2$
satisfy the same three bounds, with $\|Q_2\|_{\mathcal X_T}$ on the
right. For the nonlinear terms, insert $\mathcal N_1(Q_2)$ between
$\mathcal N_1(Q_1)$ and $\mathcal N_2(Q_2)$. The first difference is
bounded by \eqref{eq:local-nonlinear}; in the second, the coefficient
factorizations above give a factor $|\mu_1-\mu_2|$, and each term
vanishes at $Q_2=0$. Combining the bounds gives
\[
\begin{aligned}
 &\|\mathcal N_1(Q_1)-\mathcal N_2(Q_2)
                   +(\mathcal A_2-\mathcal A_1)Q_2\|_{\mathcal G_T}\\
 &\qquad\le C(R+\sqrt T)\|Q_1-Q_2\|_{\mathcal X_T}
               +C|\mu_1-\mu_2|\|Q_2\|_{\mathcal X_T},
\end{aligned}
\]
where $\|Q_i\|_{\mathcal X_T}\le R$.
The initial time derivatives satisfy
\[
 \|\mathcal F_{\mu_1}(Q_{1,0})-\mathcal F_{\mu_2}(Q_{2,0})\|_{H^1}
 \le C\bigl(\|Q_{1,0}-Q_{2,0}\|_{H^3}+|\mu_1-\mu_2|\bigr),
\]
where the subscript records the equilibrium used in the potential
equation. The linear estimate \eqref{eq:local-linear} and absorption
therefore give
\begin{equation}\label{eq:finite-time-dependence}
 \|Q_1-Q_2\|_{\mathcal X_T}
 \le C_T\bigl(\|Q_{1,0}-Q_{2,0}\|_{H^3}+|\mu_1-\mu_2|\bigr).
\end{equation}
Initially this holds on the common local interval. Repetition proves
it on every finite interval, with a constant allowed to depend on $T$.

For the physical initial data, the right-hand side is controlled by
$\|u_{1,0}-u_{2,0}\|_{H^1}+\|w_{1,0}-w_{2,0}\|_{H^3}$:
the mass functional is bounded on $L^2$, the profile map is Lipschitz,
and the difference of the two centered populations has zero mean.
Reconstruction then proves the finite-time Lipschitz assertion in
Theorem~\ref{thm:main}. For identical initial data, the mass difference
vanishes and the initial potentials agree, so \eqref{eq:finite-time-dependence}
also gives uniqueness.

Finally, nonnegativity follows by testing the weak population equation
with the negative part. With $a=\phi(w)\ge a_b$ and
$u^-=\max\{-u,0\}$,
\[
 \frac12\frac{\dd}{\dd t}\|u^-\|_2^2
 +\frac{a_b}{2}\|\nabla u^-\|_2^2
 \le\frac{\|\nabla a\|_\infty^2}{2a_b}\|u^-\|_2^2.
\]
The coefficient is bounded on finite intervals by $w\in C H^3$.
Time regularization justifies the test and Gronwall's inequality gives
$u^-=0$ if $u_0^-=0$.

\section{Numerical results}\label{sec:numerics}
The computations use $b=1$ on the ellipse
$\Omega=\{(x,y):x^2+(y/0.65)^2<1\}$.
We examine relaxation from compatible data and the dependence of the
stationary profile on motility and nutrient diffusion. Compatibility
and the diffusion condition are checked analytically for the continuous
initial family. The admissible perturbation radius remains implicit.
We track relaxation from finite perturbations using lumped $L^2$
errors and a discrete low-order energy. Appendix~\ref{app:numerics}
gives the method, fitted decay rates, and checks of amplitude, mesh
size, and solver tolerances.

\subsection{Compatible perturbations}
For $\phi(s)=1-e^{-s}$, we take $D=60$. The torsion function and an
enclosing-rectangle bound for the first Dirichlet eigenvalue give
\begin{equation}\label{eq:numerical-threshold}
 56.5101<D_3(\mu)<57.0865<60,
 \qquad 0.999\le\mu\le1.001.
\end{equation}
Appendix~\ref{app:compatible-data} derives these bounds analytically.
Let $(U_1,W_1)$ be the mass-$1$ equilibrium, set $X=x$, $Y=y/0.65$,
and let $\eta=\eta(X,Y)$ be a smooth radial bump supported in
$X^2+Y^2<0.75^2$. We use
\begin{equation}\label{eq:compatible-numerical-data}
\begin{aligned}
 u_0&=U_1+\varepsilon\eta\,(X+0.4Y)
               +(\mu-1)\frac{\eta}{\int_\Omega\eta},\\
 w_0&=W_1+\varepsilon\eta\,(1+0.3X-0.2Y).
\end{aligned}
\end{equation}
The odd term has zero integral, and both perturbations vanish near
the boundary. The data therefore have mass $\mu$ and satisfy
\eqref{eq:physical-compatibility}. Their $H^1\times H^3$ distance from
the reference state is $O(\varepsilon+|\mu-1|)$, so sufficiently small
members lie within the theorem. We use $\varepsilon=10^{-3}$ below.

For $\mu=0.999,1,1.001$, the computed solutions approach their
mass-selected equilibria. Unequal-mass trajectories retain a nonzero
distance from the mass-$1$ reference. Figures~\ref{fig:compatible-snapshots}
and \ref{fig:compatible-comparison} illustrate the convergence and
mass selection. Compared with compatible data at $D=5$, the $D=60$
solutions have a shorter initial nutrient transient, while the later
fitted energy rates are close. Decay is also observed at $D=5$, below
the sufficient diffusion bound.

\subsection{Motility and stationary shape}\label{sec:numerical-profiles}
To test Proposition~\ref{prop:large-D}, take mass $1$ and compare
\[
 \phi(s)=s\quad\hbox{and}\quad\phi(s)=\frac{2s}{1+s},
 \qquad D=60,120,240.
\]
Both laws have $\phi(1)=1$, while $\kappa_1=1$ and $1/2$,
respectively. Their sufficient threshold upper bounds are
$50.917$ and $31.860$, so all three diffusivities
are admissible. The nutrient deficit has the same leading profile;
the saturating law has half the leading population contrast.

Figure~\ref{fig:large-D-response}(a) compares the rescaled population
with the prediction. We center at $\rho_h=1/|\Omega_h|$ to account
for the polygonal area. Panel (b) uses discrete torsion coefficients
on 66049 nodes. The scaled remainders decrease approximately as
$D^{-1}$, corresponding to $D^{-2}$ before rescaling.
Appendix~\ref{app:expansion-numerics} checks coefficient convergence
separately to distinguish geometry error from the expansion remainder.

\begin{figure}[htbp]
 \centering
 \includegraphics[width=\textwidth]{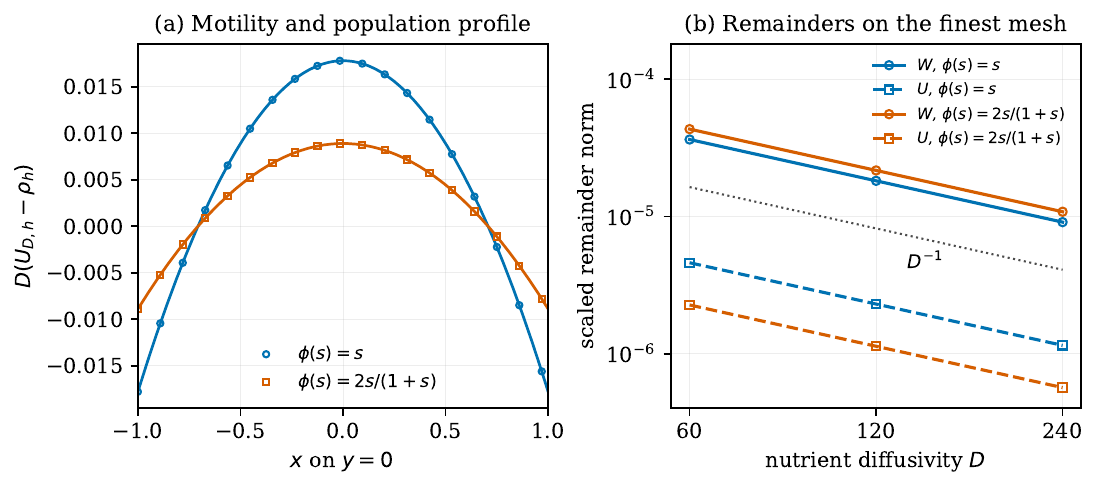}
 \caption{Large-diffusion profiles for two normalized motilities.
 (a) $D(U_{D,h}-\rho_h)$ at $D=240$ (markers) and its leading
 continuum profile (lines); they nearly coincide. The saturating law
 gives half the leading population response. (b) The scaled remainder
 norms $e_{W,h},e_{U,h}$ defined in Appendix~\ref{app:expansion-numerics}.
 The dotted line indicates order $D^{-1}$.}
 \label{fig:large-D-response}
\end{figure}

\FloatBarrier
\Needspace{7\baselineskip}
\appendix
\section{Regularity and construction details}\label{app:technical}

\subsection{Regularity of the steady profiles}\label{app:steady-proof}
\begin{proof}
We complete the proof of Proposition~\ref{prop:steady}. Let $\mu\ge\nu$
belong to the compact mass interval $J$, and write $r=W_\nu-W_\mu$.
The positive range $[b/2,b]$ and the bounds already proved in
Section~\ref{sec:steady} give uniform bounds for $g$, its needed
derivatives, and the divided difference in \eqref{eq:steady-difference}.
The Dirichlet estimate then gives
$\|r\|_{W^{2,p}}\le C_J|\mu-\nu|$.
For $p>n$, the algebra property of $W^{1,p}$ and the scalar elliptic
estimates through order four give uniform $W^{4,p}$ bounds for $W_\mu$.
Applying them to
\[
 D\Delta(W_\mu-W_\nu)
 =(Z_\mu-Z_\nu)g(W_\mu)+Z_\nu[g(W_\mu)-g(W_\nu)]
\]
successively at orders one and two gives
$\|W_\mu-W_\nu\|_{W^{4,p}}\le C_J|\mu-\nu|$.
Factoring the last difference as $(W_\mu-W_\nu)$ times an
averaged $g'$ bounds its $W^{r,p}$ norm by $C_J\|W_\mu-W_\nu\|_{W^{r,p}}$
for $r=1,2$. The scalar estimates hold on a $C^4$ domain
\cite{AgmonDouglisNirenberg,Brezis2011}.
Finally, $U_\mu=Z_\mu\Phi(W_\mu)$ with $\Phi=1/\phi$.
The embedding $W^{4,p}\hookrightarrow W^{3,\infty}$ and the $C^4$ bound
for $\Phi$ give \eqref{eq:mass-high-bounds} by differentiating the
difference through order three. On $I$, the same argument at orders zero
and one proves \eqref{eq:mass-interval-bounds} under $\phi\in C^3$.
\end{proof}

On $I=(0,1)$ the torsion function is $\tau=x(1-x)/2$, so
$D_{\rm pos}(\mu)=\mu/4$. Reflection symmetry and uniqueness give
$W_\mu'(1/2)=0$. Since $0\le W_\mu''\le b\|U_\mu\|_\infty/D$,
\begin{equation}\label{eq:interval-derivative}
 \|W_\mu'\|_\infty\le\frac{b\|U_\mu\|_\infty}{2D},
 \qquad \|U_\mu'\|_\infty\le\frac{C_J}{D}.
\end{equation}
Here and in the interval energy estimate, the coefficient bounds may be
chosen independently of large $D$ on a fixed compact mass interval.

\subsection{Proof of Lemma~\ref{lem:local}}\label{app:construction}
\begin{lemma}[Scalar estimates for the frozen system]\label{lem:scalar-frozen}
Let $0<a_0\le a_*\le a_1$, with $a_*\in W^{1,\infty}(\Omega)$,
let $W\in L^\infty(\Omega)$, and fix $D>0$. Suppose
$r_0\in H^1(\Omega)$ has zero mean, $s_0\in H_0^1(\Omega)$, and
$f,g\in L^2(0,T;L^2)$ with $\langle f(t)\rangle=0$.
For $0<T\le1$, the successive scalar problems
\begin{equation}\label{eq:scalar-frozen}
\begin{aligned}
 r_t-\Pi_0[a_*\Delta r]&=f,\\
 s_t-D\Delta s&=g+W\Delta r
\end{aligned}
\end{equation}
with $\partial_\nu r=0$, $\langle r\rangle=0$, $s|_{\partial\Omega}=0$,
and $(r,s)(0)=(r_0,s_0)$ have a unique solution satisfying
\begin{equation}\label{eq:scalar-frozen-bound}
 \|(r,s)\|_{C H^1}+\|(r,s)\|_{L^2H^2}
 +\|(r_t,s_t)\|_{L^2L^2}
 \le C\bigl(\|(r_0,s_0)\|_{H^1}
                  +\|(f,g)\|_{L^2L^2}\bigr).
\end{equation}
The $H^2$ domains are $H_N^2$ and $H^2\cap H_0^1$, respectively.
The constant is independent of $T\le1$ and is uniform for fixed
$\Omega,D$ and common bounds on $a_0^{-1},a_1,\|W\|_\infty$.
\end{lemma}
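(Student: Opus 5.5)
The plan is to solve the two equations in succession: first the population equation for $r$ on its own, then the nutrient equation for $s$ with $W\Delta r\in L^2L^2$ as an extra forcing. The heat equation for $s$ is classical. For $s_0\in H_0^1$ and forcing $g+W\Delta r\in L^2L^2$, Galerkin approximation in Dirichlet eigenfunctions, tested with $s_t$ and with $-\Delta s$, gives
\[
 \|s\|_{CH^1}+\|s\|_{L^2H^2}+\|s_t\|_{L^2L^2}\le C\bigl(\|s_0\|_{H^1}+\|g\|_{L^2L^2}+\|W\|_\infty\|\Delta r\|_{L^2L^2}\bigr).
\]
This uses elliptic $H^2$ regularity on the $C^4$ domain, and the constant does not depend on $T\le1$. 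The real work is therefore the estimate for $r$.

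For $r$, I would use a Galerkin scheme in the Neumann eigenfunctions $\{e_k\}_{k\ge1}$ with nonzero eigenvalues, which span the mean-zero subspace. Since $\Pi_0$ is the orthogonal projection onto this subspace, the projected equation is consistent with the scheme. Testing with $-\Delta r$ gives
\[
 \tfrac12\tfrac{\dd}{\dd t}\|\nabla r\|_2^2+\int a_*|\Delta r|^2=-\int f\Delta r,
\]
because the mean part of $a_*\Delta r$ pairs to zero against $\Delta r$, which has zero mean. Young's inequality then gives
\[
 \|\nabla r\|_{C L^2}^2+a_0\|\Delta r\|_{L^2L^2}^2\le\|\nabla r_0\|_2^2+a_0^{-1}\|f\|_{L^2L^2}^2.
\]
The Neumann $H^2$ estimate and the mean-zero Poincaré inequality turn this into the $CH^1\cap L^2H^2$ bound. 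From the equation itself,
\[
 \|r_t\|_2\le\|f\|_2+a_1\|\Delta r\|_2,
\]
which gives $r_t\in L^2L^2$. No derivative of $a_*$ enters these estimates, so the constant depends only on $a_0^{-1}$ and $a_1$.

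The main obstacle is passing from the Galerkin approximations to an actual solution of the scheme. The operator $\Pi_0[a_*\cdot\Delta]$ is not self-adjoint, so I would project it onto $\mathrm{span}\{e_1,\dots,e_N\}$; the test with $-\Delta r_N$ stays inside this space because each $e_k$ is an eigenfunction. Weak compactness in $L^2H^2$, $H^1L^2$ and $L^\infty H^1$, together with linearity, gives the limit. I would then close the argument as follows.
\begin{itemize}
\item Existence: for $r$ the limit satisfies the equation and the estimate; for $s$ the limit follows in the same way once $r$ is fixed.
\item Continuity $CH^1$: this follows from the Lions--Magenes lemma, since $r,s\in L^2H^2\cap H^1L^2$ with the respective boundary conditions.
\item Uniqueness: apply the same energy estimate to the difference of two solutions, which has zero data.
\end{itemize}
The assumption $a_*\in W^{1,\infty}$ is needed only to make sense of products and is not used in the bounds, which depend only on $a_0^{-1}$, $a_1$ and $\|W\|_\infty$, as the lemma requires.
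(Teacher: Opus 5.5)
Your proof is correct, and its a priori estimates are the ones the paper writes down: test with $-\Delta r$, apply Young, recover $r_t$ from the equation, then use the Dirichlet heat estimate for $s$ with forcing $g+W\Delta r$.

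The difference is in how existence is obtained. The paper does not discretize. It notes that the Neumann realization of $-a_*\Delta$ is the nonnegative self-adjoint operator on $L^2(\Omega,a_*^{-1}\dd x)$ associated with the form $\int_\Omega\nabla h\cdot\nabla v$ on $H^1$. Hilbert-space parabolic regularity (cited from Denk--Hieber--Pr\"uss and Solonnikov) then solves the \emph{unprojected} problem $h_t-a_*\Delta h=f$, with $h\in CH^1\cap L^2H_N^2$ and $h_t\in L^2L^2$; the form domain $H^1$ serves as the trace space. The paper then sets $r=h-\langle h\rangle$, which solves the projected equation because $\langle f\rangle=0$. The energy test is used only to make the $T$-independence and the dependence on $a_0^{-1},a_1,\|W\|_\infty$ explicit.

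Your Galerkin scheme instead handles the projected equation directly. It uses two facts: $\Pi_0$ is the $L^2$-orthogonal projection onto the closed span of the nonconstant Neumann eigenfunctions, and each $V_N$ is invariant under $\Delta$. So the test with $-\Delta r_N$ is legitimate at the approximate level. This makes the argument self-contained, needing only elliptic $H^2$ regularity and the Lions--Magenes lemma, and it justifies the energy identity without any prior regularity. The cost is a compactness passage and a separate continuity argument; the paper's route gets existence, the regularity class and $CH^1$ continuity in one stroke from the cited theory.

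Two small remarks. First, the hypothesis $\langle f\rangle=0$ enters your argument at the limit. Testing only against the $e_k$ identifies the mean-zero part of the equation, and you need $\langle f\rangle=0$ to conclude $r_t=\Pi_0[a_*\Delta r]+f$ rather than $r_t=\Pi_0[a_*\Delta r]+\Pi_0 f$. Second, neither proof uses $a_*\in W^{1,\infty}$. Even the product $a_*\Delta r$ needs only $a_*\in L^\infty$, so your last sentence slightly overstates its role; that hypothesis matters only later, in the elliptic recovery of $q$.
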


\begin{proof}
First solve $h_t-a_*\Delta h=f$, $\partial_\nu h=0$, $h(0)=r_0$.
The scalar realization of $-a_*\Delta$ has domain $H_N^2$ in
$L^2(\Omega,a_*^{-1}\dd x)$: it is the nonnegative operator associated
with the form $\int\nabla h\cdot\nabla v$ on $H^1$.
Its form domain, and hence the initial trace space, is $H^1$.
Scalar $L^2$ parabolic regularity therefore gives
$h\in C H^1\cap L^2H_N^2$ and $h_t\in L^2L^2$
\cite{DenkHieberPruss,Solonnikov1965}.
The weighted and unweighted $L^2$ norms are equivalent.
Set $r=h-\langle h\rangle$. Since $f$ has zero mean and
$\Delta r=\Delta h$, subtraction of the evolving mean gives the
first equation of \eqref{eq:scalar-frozen} with initial value $r_0\in H^1$.

The uniform estimate can be seen directly. Testing with $-\Delta r$
and using the mean-zero Neumann inequalities gives
\[
 \frac12\frac{\dd}{\dd t}\|\nabla r\|_2^2
 +\frac{a_0}{2}\|\Delta r\|_2^2
 \le\frac{1}{2a_0}\|f\|_2^2.
\]
The equation bounds $r_t$ in $L^2L^2$. Next solve the Dirichlet heat
equation for $s$ with source $g+W\Delta r\in L^2L^2$ and initial
trace $s_0\in H_0^1$. Testing it with $-\Delta s$ yields
\[
 \frac12\frac{\dd}{\dd t}\|\nabla s\|_2^2
 +\frac D2\|\Delta s\|_2^2
 \le\frac1D\bigl(\|g\|_2^2+\|W\|_\infty^2\|\Delta r\|_2^2\bigr).
\]
Scalar elliptic estimates, the two equations, and their $H^1$ trace
continuity prove \eqref{eq:scalar-frozen-bound}. The same estimates
applied to differences give uniqueness. They also prove the claimed
uniformity without a time-dependent constant.
\end{proof}

\begin{proof}[Proof of Lemma~\ref{lem:local}]
For $T\le1$, write
\[
 \mathcal G_T=H^1(0,T;L^2(\Omega)^2)
                      \cap L^2(0,T;H^2(\Omega)^2),
 \qquad
 \|G\|_{\mathcal G_T}
       =\|G_t\|_{L^2L^2}+\|G\|_{L^2H^2}.
\]
Let $\mathcal X_T$ be the space in \eqref{eq:strong-class}, with
the stated boundary and mean conditions and norm
\begin{equation}\label{eq:construction-norm}
\begin{aligned}
 \|Q\|_{\mathcal X_T}
   ={}&\|Q\|_{C H^3}+\|Q\|_{L^2H^4}
       +\|Q_t\|_{C H^1}\\
     &+\|Q_t\|_{L^2H^2}+\|Q_{tt}\|_{L^2L^2}.
\end{aligned}
\end{equation}
Freeze the principal population coefficient at $a_*=\phi(W)$.
Then the system takes the form
\begin{equation}\label{eq:frozen}
 Q_t+\mathcal A_*Q=\mathcal N(Q),
\end{equation}
where
\[
 \mathcal A_*Q=
 \begin{pmatrix}-\Pi_0[a_*\Delta q]\\-W\Delta q-D\Delta\psi\end{pmatrix},
 \qquad
 \mathcal N(Q)=
 \begin{pmatrix}
   \Pi_0[(a-a_*)\Delta q-U\Theta]\\
   \psi\Delta q-U\psi
 \end{pmatrix}.
\]
First replace $\mathcal N(Q)$ by $G\in\mathcal G_T$, with mean-zero
first component. Suppose that the derivative prescribed at time zero is
\[
 \dot Q_0=G(0)-\mathcal A_*Q_0\in H^1\times H_0^1.
\]
The dot here denotes this prescribed initial derivative.
We find $Q_t=(r,s)$ by solving successively
\begin{equation}\label{eq:frozen-derivatives}
\begin{aligned}
 r_t-\Pi_0[a_*\Delta r]&=(G_1)_t,\\
 s_t-D\Delta s&=(G_2)_t+W\Delta r,
\end{aligned}
\end{equation}
with $\partial_\nu r=0$, $\langle r\rangle=0$, $s=0$ on the boundary,
and $(r,s)(0)=\dot Q_0$. Lemma~\ref{lem:scalar-frozen} gives
\[
 \|(r,s)\|_{C H^1}+\|(r,s)\|_{L^2H^2}
       +\|(r_t,s_t)\|_{L^2L^2}
 \le C\bigl(\|\dot Q_0\|_{H^1}+\|G_t\|_{L^2L^2}\bigr).
\]
Define $Q(t)=Q_0+\int_0^t(r,s)(\sigma)\dd\sigma$.
The residual $Q_t+\mathcal A_*Q-G$ has zero time derivative and
vanishes at zero, so $Q$ solves the forced equation.

To recover the spatial derivatives, first note the trace estimate
\begin{equation}\label{eq:forcing-trace}
 \|G\|_{C H^1}^2
 \le C\|G(0)\|_{H^1}^2
       +C\|G_t\|_{L^2L^2}\|G\|_{L^2H^2}.
\end{equation}
For this estimate, extend $G$ spatially by a fixed extension operator
bounded on $L^2,H^1,H^2$. For the extension $\widetilde G$, the
identity
\[
 \frac12\frac{\dd}{\dd t}\|\widetilde G\|_{H^1(\R^n)}^2
     =(\widetilde G_t,(1-\Delta)\widetilde G)_{L^2(\R^n)}
\]
and Cauchy--Schwarz prove \eqref{eq:forcing-trace}, first for smooth
functions and then by density. The spatial extension makes the constant
independent of $T$ and allows unrestricted boundary values for $G$.

The population equation gives
\[
 a_*\Delta q=q_t-G_1+c(t),\qquad
 c(t)=-\frac{\int_\Omega a_*^{-1}(q_t-G_1)}
                  {\int_\Omega a_*^{-1}}.
\]
The mean correction enforces $\int_\Omega\Delta q=0$.
For $k=1,2$, the stationary coefficient bounds and the normalized
Neumann estimate give
\[
 \|q\|_{H^{k+2}}
       \le C\bigl(\|q_t\|_{H^k}+\|G_1\|_{H^k}\bigr).
\]
Having recovered $q$, use
$D\Delta\psi=\psi_t-G_2-W\Delta q$ and the Dirichlet estimate to obtain
\[
 \|\psi\|_{H^{k+2}}
       \le C\bigl(\|\psi_t\|_{H^k}+\|G_2\|_{H^k}
                                  +\|\Delta q\|_{H^k}\bigr).
\]
Apply the $k=1$ bounds in continuous time and the $k=2$ bounds in
$L^2$ time. Since $G(0)=\dot Q_0+\mathcal A_*Q_0$, the result is
\begin{equation}\label{eq:local-linear}
 \|Q\|_{\mathcal X_T}
 \le C\bigl(\|Q_0\|_{H^3}+\|\dot Q_0\|_{H^1}
                                  +\|G\|_{\mathcal G_T}\bigr).
\end{equation}
All constants are uniform for $T\le1$ and for the nearby steady
profiles under consideration.

We now estimate $\mathcal N(Q)$ on the ball
$\|Q\|_{\mathcal X_T}\le R$, with $R$ small enough to keep
$W+\psi\in I_b$. Since $a-a_*=\Theta$, the spatial estimate for
the principal nonlinear term is
\begin{align*}
 \|(a-a_*)\Delta q\|_{L^2H^2}
 &\le C\|\psi\|_{C H^3}\|\Delta q\|_{L^2H^2}\\
 &\le C\|\psi\|_{C H^3}\|q\|_{L^2H^4}\le CR^2.
\end{align*}
Its time derivative is
\[
 \partial_t[(a-a_*)\Delta q]
     =\phi'(W+\psi)\psi_t\Delta q+(a-a_*)\Delta q_t.
\]
Estimate the two terms separately:
\begin{align*}
 \|\phi'(W+\psi)\psi_t\Delta q\|_{L^2L^2}
 &\le C\|\psi_t\|_{L^2H^1}\|q\|_{C H^3}\le CR^2,\\
 \|(a-a_*)\Delta q_t\|_{L^2L^2}
 &\le C\|\psi\|_{C H^3}\|q_t\|_{L^2H^2}\le CR^2.
\end{align*}
Here the first product uses $H^1\cdot H^1\subset L^2$.
The term $\psi\Delta q$ in the nutrient equation satisfies the same
estimates, with $\phi'(W+\psi)$ replaced by $1$.

The terms $U\Theta$ and $U\psi$ have lower spatial order. For example,
\begin{align*}
 \|U\Theta\|_{L^2H^2}
       &\le C\sqrt T\,\|\psi\|_{C H^3},\\*
 \|\partial_t(U\Theta)\|_{L^2L^2}
       &=\|U\phi'(W+\psi)\psi_t\|_{L^2L^2}
         \le C\sqrt T\,\|\psi_t\|_{C H^1}.
\end{align*}
The corresponding bounds for $U\psi$ are immediate. Together,
these estimates prove the first inequality below:
\begin{equation}\label{eq:local-nonlinear}
\begin{aligned}
 \|\mathcal N(Q)\|_{\mathcal G_T}&\le C(R+\sqrt T)R,\\
 \|\mathcal N(Q)-\mathcal N(\widetilde Q)\|_{\mathcal G_T}
       &\le C(R+\sqrt T)\|Q-\widetilde Q\|_{\mathcal X_T}.
\end{aligned}
\end{equation}
For the second, separate a typical difference as
\[
 (a-a_*)\Delta q-(\widetilde a-a_*)\Delta\widetilde q
  =(a-a_*)\Delta(q-\widetilde q)
       +(a-\widetilde a)\Delta\widetilde q,
\]
and use
\[
 a-\widetilde a=(\psi-\widetilde\psi)
       \int_0^1\phi'\bigl(W+\widetilde\psi
                       +\theta(\psi-\widetilde\psi)\bigr)\dd\theta.
\]
The same product bounds apply in $L^2H^2$ and after time
differentiation. Each principal term contains one factor bounded by
$R$ and one difference; the lower-order terms contain the factor
$\sqrt T$.

On the closed ball in $\mathcal X_T$ with $Q(0)=Q_0$, solve the
linear problem with $G=\mathcal N(Q)$. Its initial derivative is
$\mathcal F(Q_0)$, whose nutrient trace is zero by compatibility.
To see that the ball is nonempty, consider the homogeneous frozen
solution. Its initial derivative has the required nutrient trace since
\[
 (-\mathcal A_*Q_0)_2|_{\partial\Omega}
       =(D\Delta\psi_0+b\Delta q_0)|_{\partial\Omega}=0.
\]
Moreover, $\|\mathcal A_*Q_0\|_{H^1}\le C\|Q_0\|_{H^3}$, so
\eqref{eq:local-linear} bounds its $\mathcal X_T$ norm by
$C\|Q_0\|_{H^3}$. For sufficiently small data this is below $R$.
For two elements of this ball, $\mathcal N(Q)(0)$ and
$\mathcal N(\widetilde Q)(0)$ agree. The linear difference estimate
therefore has zero initial state and zero initial derivative.
Writing $\mathcal T$ for the resulting solution map, we have
\begin{align*}
 \|\mathcal TQ\|_{\mathcal X_T}
       &\le C_0\|Q_0\|_{H^3}+C_1(R+\sqrt T)R,\\
 \|\mathcal TQ-\mathcal T\widetilde Q\|_{\mathcal X_T}
       &\le C_1(R+\sqrt T)\|Q-\widetilde Q\|_{\mathcal X_T}.
\end{align*}
Choose $R$ small, then $T_{\rm loc}$ small, so that
$C_1(R+\sqrt{T_{\rm loc}})<1/2$. Finally choose $\rho_{\rm loc}$
with $C_0\rho_{\rm loc}<R/2$. Banach's theorem gives a solution.
The bound
\[
 \|\mathcal F(Q_0)-\mathcal F(\widetilde Q_0)\|_{H^1}
       \le C\|Q_0-\widetilde Q_0\|_{H^3}
\]
and the same absorption give Lipschitz dependence on the initial data.
For any other solution with the same data,
$Q_t(0)=\mathcal F(Q_0)$. After decreasing the data radius, continuity
at zero makes $\|Q\|_{C H^3}+\|Q_t\|_{C H^1}<R/2$ on a short
interval. The time-integrated terms in \eqref{eq:construction-norm}
tend to zero with the interval length, so their sum is also below
$R/2$ after a further reduction. The competing solution therefore
belongs to the contraction ball. Repeating the argument proves
uniqueness while the state remains small.

The equation gives $Q_t(t)=\mathcal F(Q(t))$ in $H^1$ at every time.
The nutrient trace vanishes almost everywhere and is continuous in
time, so it vanishes at every time. Compatibility is therefore
preserved. The common local time permits a restart whenever
$\|Q(t)\|_{H^3}<\rho_{\rm loc}/2$, proving
\eqref{eq:continuation}. All constants are uniform under the common
ellipticity and stationary coefficient bounds for nearby masses.
\end{proof}

\Needspace{7\baselineskip}
\section{The interval problem at lower regularity}\label{app:interval}
\begin{theorem}[Interval stability and mass selection]\label{thm:interval}
Assume the structural and $C^3$ conditions in Assumption~\ref{ass:phi}.
For each $m>0$ there is a finite $D_1(m)\ge m/4$ such that,
if $D>D_1(m)$, there are $r_m,\delta,C,\omega>0$ with the
following property. Let $u_0\in L^2(I)$, $w_0-b\in H_0^1(I)$, and put
\[
 d_0=\|u_0-U_m\|_2+\|w_0-W_m\|_{H^1}<\delta,
 \qquad \mu_0=\int_Iu_0.
\]
Then $\mu_0\in I_m=[m-r_m,m+r_m]\Subset(0,\infty)$, and
\eqref{eq:model} has a unique global solution such that
\begin{equation}\label{eq:interval-class}
\begin{aligned}
 u-U_{\mu_0}&\in C([0,T];L^2(I))\cap L^2(0,T;H^1(I)),\\
 w-W_{\mu_0}&\in C([0,T];H_0^1(I))\cap L^2(0,T;H^2(I))
\end{aligned}
\end{equation}
for every $T>0$. The nutrient equation holds in $L^2(0,T;L^2)$ and
the population equation, including zero flux, has the variational meaning
\begin{equation}\label{eq:interval-weak-flux}
 u_t\in L^2(0,T;(H^1(I))'),\qquad
 \langle u_t,v\rangle=-\int_I[u\phi(w)]_xv_x,\quad v\in H^1(I).
\end{equation}
Moreover, $b/4\le w\le5b/4$ and
\begin{equation}\label{eq:interval-decay}
 \|u(t)-U_{\mu_0}\|_2+\|w(t)-W_{\mu_0}\|_{H^1}
 \le Ce^{-\omega t}d_0.
\end{equation}
The initial mass and the selected profile satisfy the recentering bound
\[
 |\mu_0-m|+\|U_{\mu_0}-U_m\|_2+\|W_{\mu_0}-W_m\|_{H^1}
 \le Cd_0.
\]
The selection map is locally Lipschitz in $L^2\times H^1$; the constants
can be chosen uniformly for nearby reference masses. Nonnegative initial
population remains nonnegative.
\end{theorem}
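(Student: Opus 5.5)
Following the introduction, the plan replaces the Neumann potential by the antiderivative of the population perturbation. For data of mass $\mu_0\in I_m$, center at $(U,W)=(U_{\mu_0},W_{\mu_0})$, given by Proposition~\ref{prop:steady} on $I$ once $D\ge D_{\rm pos}(\mu)=\mu/4$. Set
\[
 z(x,t)=\int_0^x(u-U)\dd y,\qquad \psi=w-W .
\]
Mass conservation gives $z(0)=z(1)=0$. The zero-flux condition integrated in space gives
\[
 z_t=(u\phi(w))_x-(U\phi(W))_x=(a z_x+U\Theta)_x,
\]
where $a=\phi(W+\psi)$ and $\Theta=\phi(W+\psi)-\phi(W)$; here $U\phi(W)=Z$ is constant. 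The nutrient perturbation satisfies
\[
 \psi_t=D\psi_{xx}-U\psi-(W+\psi)z_x .
\]
Both components carry homogeneous Dirichlet conditions. The physical norm $\|u-U\|_2+\|\psi\|_{H^1}$ is equivalent to $\|(z,\psi)\|_{H^1_0}$, and the recentering bound follows from \eqref{eq:mass-interval-bounds} together with the Cauchy--Schwarz bound $|\mu_0-m|\le\|u_0-U_m\|_2$.

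\textbf{Energy.} The low-order energy $E_0=\|z\|_2^2+\|\psi\|_2^2$ is obtained by testing with $z$ and $\psi$. This gives
\[
 \tfrac12E_0'+\int a z_x^2+D\|\psi_x\|_2^2+\int U\psi^2=-\int U\Theta z_x-\int(W+\psi)z_x\psi .
\]
The right-hand side is bounded by $(M L_b+5b/4)\|\psi\|_2\|z_x\|_2$, and it is absorbed exactly as in \eqref{eq:low-energy} once $D>D_{\rm en}$, with $\lambda_1=\pi^2$. For the top order I will test the $z$-equation with $-z_{xx}$ and the $\psi$-equation with $-\psi_{xx}$. The term $(az_x)_x z_{xx}$ gives $a z_{xx}^2+a_xz_xz_{xx}$, and $a_x=\phi'(w)(W_x+\psi_x)$. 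The steady part is $O(1/D)$ in $L^\infty$ by \eqref{eq:interval-derivative}. The perturbation part $\psi_x z_x z_{xx}$ is bounded using $\|\psi_x\|_\infty\le C\|\psi\|_{H^2}$ and Gagliardo--Nirenberg, giving a contribution $\le C\mathcal E^{1/2}\mathcal D$ with $\mathcal E=\|(z,\psi)\|_{H^1}^2$ and $\mathcal D=\|(z,\psi)\|_{H^2}^2$. The remaining linear couplings are $(U\Theta)_x z_{xx}$, $U\psi\psi_{xx}$ and $(W+\psi)z_x\psi_{xx}$. They are absorbed by Young and Poincar\'e as in the derivation of $K(\mu)$. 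The terms involving $U_x$ and $W_x$ are $O(1/D)$ by \eqref{eq:interval-derivative}. This defines $D_1(m)$ as a finite maximum of $m/4$, $D_{\rm en}(m)$, and a $K$-type threshold. All coefficient bounds are independent of large $D$ on a compact mass interval, so such a finite $D_1$ exists. The result is
\[
 \mathcal E'+c\mathcal D\le C\mathcal E^{1/2}\mathcal D .
\]
Pointwise control comes for free, since $\|\psi\|_\infty\le\|\psi_x\|_2\le\mathcal E^{1/2}$ in one dimension. This keeps $w\in I_b$ for small $\mathcal E$. No time derivative and no compatibility condition are needed, which is why the data space is only $L^2\times H^1$.

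\textbf{Construction and conclusion.} For local existence I will solve the quasilinear system in $C H^1_0\cap L^2H^2$ by a contraction. Freeze $a_*=\phi(W)$, use the maximal $L^2$ regularity of Lemma~\ref{lem:scalar-frozen} in its Dirichlet version, and treat $(a-a_*)z_x$, $U\Theta$ and $\psi z_x$ as forcing in $L^2L^2$ after one spatial derivative. The bound is $\|\psi\|_{L^\infty H^1}\|z\|_{L^2H^2}$ together with a $\sqrt T$ factor for the lower-order terms. Continuation then follows from the absorbed energy inequality exactly as in Section~\ref{sec:global}, together with Poincar\'e $\mathcal E\le C\mathcal D$. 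This yields \eqref{eq:interval-decay}. The physical solution is recovered from $u=U+z_x$; differentiating the $z$-equation gives the variational form \eqref{eq:interval-weak-flux}, since $u\phi(w)-Z=z_t-$const, so the flux vanishes weakly. Uniqueness, the Lipschitz dependence on data and mass, and nonnegativity follow as in the multidimensional difference argument and the $u^-$ test. For the latter, $a_x\in L^2L^\infty$ suffices, giving an integrable Gronwall coefficient.

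\textbf{Main obstacle.} I expect the main obstacle to be the cross term $a_xz_xz_{xx}$ in the top-order estimate. Its steady part must be handled by smallness of $W_x$ for large $D$, which is the source of $D_1(m)$ rather than an explicit formula. Its perturbation part requires a careful one-dimensional interpolation so that it is genuinely cubic, of order $\mathcal E^{1/2}\mathcal D$. A second point is that weak solutions of this regularity must be justified as satisfying the energy identities; I will handle this via the Lions--Magenes lemma for functions in $H^1(0,T;L^2)\cap L^2(0,T;H^2)$.
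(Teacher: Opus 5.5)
Your proposal is correct and is essentially the paper's proof. The shared steps are: the same antiderivative ($z$ is the paper's $P$); the same tests with $-z_{xx}$ and $-\psi_{xx}$, where the steady gradients are $O(1/D)$ and large $D$ absorbs the couplings; a Dirichlet maximal-regularity contraction; and recentering via \eqref{eq:mass-interval-bounds}. Adding $E_0$ is redundant in one dimension, and your cubic bound via $\|\psi_x\|_\infty\le C\|\psi\|_{H^2}$ is a harmless variant of the paper's interpolation-plus-bootstrap estimate.

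One slip in the recovery step: the correct relation is $z_t=(u\phi(w))_x$, as you wrote earlier, not $u\phi(w)-Z=z_t-\mathrm{const}$, which is the formula for the multidimensional potential $q$. The weak zero flux then follows from $z(t)\in H_0^1$ when integrating by parts against $v\in H^1(I)$.
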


\begin{proof}
Fix the actual mass $m$ and write $(U,W)=(U_m,W_m)$. Set
\begin{equation}\label{eq:interval-system}
\begin{aligned}
 P(x,t)&=\int_0^x(u(y,t)-U(y))\dd y,\qquad \psi=w-W,\\
 P_t&=[(U+P_x)\phi(W+\psi)]_x,\\
 \psi_t&=D\psi_{xx}-U\psi-WP_x-\psi P_x,
 \qquad P|_{\partial I}=\psi|_{\partial I}=0.
\end{aligned}
\end{equation}
The initial primitive belongs to $H_0^1$ and
$\|P_0\|_{H^1}\le C\|u_0-U\|_2$.
Use the trace space $X=H_0^1(I)^2$ and
\[
 \mathbb E_T=H^1(0,T;L^2(I)^2)
       \cap L^2(0,T;(H^2(I)\cap H_0^1(I))^2)
       \hookrightarrow C([0,T];X),
\]
with the sum norm including the continuous trace norm. Linearize at zero:
\[
 \mathcal A\binom P\psi
 =\binom{[\phi(W)P_x+U\phi'(W)\psi]_x}
              {D\psi_{xx}-U\psi-WP_x}.
\]
Its principal matrix is diagonal and uniformly positive; all off-diagonal
terms are of first or zeroth order. Dirichlet maximal regularity
\cite{DenkHieberPruss,Solonnikov1965} gives the linear solution estimate
\begin{equation}\label{eq:interval-MR}
 \|h\|_{\mathbb E_T}\le C(\|h_0\|_X+\|h_t-\mathcal Ah\|_{L^2L^2}),
 \qquad 0<T\le1.
\end{equation}
Testing with $-P_{xx}$ and $-\psi_{xx}$, and estimating the
lower-order terms by Young's inequality, gives
$\frac{\dd}{\dd t}\|h_x\|_2^2+c\|h_{xx}\|_2^2
\le C\|h_x\|_2^2+C\|F\|_2^2$; the equations control $h_t$.
Gronwall's inequality gives a constant uniform for $T\le1$.

The nonlinear remainder is
\[
 \mathcal N(h)=\binom{[P_x(\phi(W+\psi)-\phi(W))
       +U(\phi(W+\psi)-\phi(W)-\phi'(W)\psi)]_x}{-\psi P_x}.
\]
On a ball $\|h\|_{\mathbb E_T}\le R$ small enough to keep
$W+\psi\in I_b$, Taylor's formula and $H^1(I)\hookrightarrow L^\infty(I)$
give
\begin{align*}
 \|\mathcal N(h)\|_{L^2L^2}&\le C(1+\sqrt T)R^2,\\
 \|\mathcal N(h)-\mathcal N(\widetilde h)\|_{L^2L^2}
 &\le C(1+\sqrt T)R\|h-\widetilde h\|_{\mathbb E_T}.
\end{align*}
For example, the derivative product obeys
$\|\psi_xP_x\|_{L^2L^2}
\le C\|\psi\|_{C H^1}\|P\|_{L^2H^2}$; all other terms have the
same or lower order. Choose the common nonlinear constant first, then
$R,T_{\rm loc}$ with $C_{\rm MR}C(1+\sqrt{T_{\rm loc}})R<1/2$,
and finally a trace radius $\delta_{\rm loc}<R/(4C_{\rm MR})$.
The linear solution map with forcing $\mathcal N(h)$ is a contraction.
It gives existence, uniqueness, and a common restart time whenever the
trace norm stays below $\delta_{\rm loc}$.

The stationary bounds and \eqref{eq:interval-derivative} give
$\|U\|_\infty\le C$, $b/2\le W\le b$, and
$\|U_x\|_\infty+\|W_x\|_\infty\le C/D$, with constants independent
of $D\ge m/4$. On a bootstrap interval assume
$\|P\|_{H^1}+\|\psi\|_{H^1}\le\delta_*$ and
$\|\psi\|_\infty<b/4$. Put $a=\phi(W+\psi)$ and
$\Theta=\phi(W+\psi)-\phi(W)$.
Testing the primitive equation with $-P_{xx}$ gives
\[
 \frac12\frac{\dd}{\dd t}\|P_x\|_2^2+\int_IaP_{xx}^2
 =-\int_I[U\Theta]_xP_{xx}
   -\int_IP_x\phi'(W+\psi)(W_x+\psi_x)P_{xx}.
\]
We treat the three terms on the right separately. Since
\[
 \Theta_x=\phi'(W+\psi)\psi_x
                 +[\phi'(W+\psi)-\phi'(W)]W_x,
\]
the coefficient bounds, Poincar\'e's inequality for $\psi$, and
Young's inequality imply
\begin{align*}
 \|[U\Theta]_x\|_2
       &\le C\bigl(\|\psi_x\|_2+\|\psi\|_2\bigr)
        \le C\|\psi_x\|_2,\\
 \left|\int_I[U\Theta]_xP_{xx}\right|
       &\le\frac{a_b}{8}\|P_{xx}\|_2^2+C\|\psi_x\|_2^2.
\end{align*}
Next, $\int_I P_x=0$, so $\|P_x\|_2\le C\|P_{xx}\|_2$ and
\[
 \left|\int_I P_x\phi'(W+\psi)W_xP_{xx}\right|
       \le\frac{C}{D}\|P_x\|_2\|P_{xx}\|_2
       \le\frac{C}{D}\|P_{xx}\|_2^2.
\]
For the term containing $\psi_x$, use the interpolation inequality
\[
 \|P_x\|_\infty\le C\|P_x\|_2^{1/2}\|P_{xx}\|_2^{1/2}.
\]
Consequently,
\begin{align*}
 \left|\int_I P_x\phi'(W+\psi)\psi_xP_{xx}\right|
 &\le C\|\psi_x\|_2\|P_x\|_2^{1/2}\|P_{xx}\|_2^{3/2}\\*
 &\le\frac{a_b}{8}\|P_{xx}\|_2^2
                  +C\|\psi_x\|_2^4\|P_x\|_2^2\\*
 &\le\left(\frac{a_b}{8}+C\delta_*^4\right)\|P_{xx}\|_2^2.
\end{align*}
Here Young's inequality was used with exponents $4/3$ and $4$.
Since $\int_I\psi_x=0$, Poincar\'e also gives
$\|\psi_x\|_2\le C\|\psi_{xx}\|_2$. Choose $D$ large and then
$\delta_*$ small to retain a fixed $c_1>0$. We obtain
\begin{equation}\label{eq:interval-P-energy}
 \frac12\frac{\dd}{\dd t}\|P_x\|_2^2+c_1\|P_{xx}\|_2^2
       \le C_2\|\psi_{xx}\|_2^2.
\end{equation}

Testing the nutrient equation with $-\psi_{xx}$ gives
\[
 \frac12\frac{\dd}{\dd t}\|\psi_x\|_2^2+D\|\psi_{xx}\|_2^2
       =\int_I U\psi\psi_{xx}
          +\int_I WP_x\psi_{xx}
          +\int_I\psi P_x\psi_{xx}.
\]
For these three integrals, the estimates are
\begin{align*}
 \left|\int_I U\psi\psi_{xx}\right|
 &\le C\|\psi\|_2\|\psi_{xx}\|_2
  \le\left(\frac D8+\frac CD\right)\|\psi_{xx}\|_2^2,\\
 \left|\int_I WP_x\psi_{xx}\right|
 &\le b\|P_x\|_2\|\psi_{xx}\|_2
  \le\frac D8\|\psi_{xx}\|_2^2+\frac CD\|P_{xx}\|_2^2,\\
 \left|\int_I\psi P_x\psi_{xx}\right|
 &\le C\delta_*\|P_x\|_2\|\psi_{xx}\|_2
  \le\frac D8\|\psi_{xx}\|_2^2
                    +\frac{C\delta_*^2}{D}\|P_{xx}\|_2^2.
\end{align*}
Thus, for a fixed common $C_3$,
\begin{equation}\label{eq:interval-psi-energy}
 \frac12\frac{\dd}{\dd t}\|\psi_x\|_2^2
       +\left(\frac D2-\frac{C_3}{D}\right)\|\psi_{xx}\|_2^2
 \le\frac{C_3(1+\delta_*^2)}{D}\|P_{xx}\|_2^2.
\end{equation}
The energy identity in $\mathbb E_T$ justifies these tests with
$H_0^1$ initial traces.
Fixing the constants above, choose $D_1(m)$ large enough that
for $D>D_1(m)$,
\[
 c_1-\frac{C_3(1+\delta_*^2)}D>0,\qquad
 \frac D2-\frac{C_3}D-C_2>0.
\]
Adding \eqref{eq:interval-P-energy} and \eqref{eq:interval-psi-energy}
then gives $E_1'+c_0\mathcal D_1\le0$, where
\[
 E_1=\|P_x\|_2^2+\|\psi_x\|_2^2,\qquad
 \mathcal D_1=\|P_{xx}\|_2^2+\|\psi_{xx}\|_2^2
                         \ge\pi^2E_1.
\]
Hence $E_1(t)\le E_1(0)e^{-c_0\pi^2t}$.
Choose the initial radius so that this estimate keeps the $H^1$ norm
below both the bootstrap and local restart radii, and keeps
$\|\psi\|_\infty<b/4$. Continuity improves the bootstrap strictly;
the common restart time gives global existence.
Since $u-U=P_x$, this proves the equal-mass decay and
\eqref{eq:interval-class}.

Let $F=u\phi(w)$. The primitive equation gives $P_t=F_x\in L^2L^2$,
so differentiating weakly in $x$ yields \eqref{eq:interval-weak-flux}.
Conversely, for a physical solution in \eqref{eq:interval-class} in a
positive nutrient range,
$F_x=\phi(w)u_x+u\phi'(w)w_x\in L^2L^2$, since
$u\in L^2H^1$ and $w\in C H^1$. Testing
\eqref{eq:interval-weak-flux} with
$v_\eta(x)=\int_x^1\eta(s)\dd s$ gives $P_t=F_x$ by Fubini's identity.
The nutrient equation similarly gives $\psi_t\in L^2L^2$.
Thus the primitive lies in $\mathbb E_T$, and local uniqueness, repeated
on overlapping intervals, proves uniqueness in the stated physical class.

For uniformity, first take a compact mass interval around $m$ on which
$D>\mu/4$. The difference equation \eqref{eq:steady-difference} on $I$
gives Lipschitz dependence in $W^{2,\infty}$ for $W_\mu$ and in
$W^{1,\infty}$ for $U_\mu$. Keep the Young parameters and $\delta_*$
fixed in the preceding estimates. Their coefficients can be chosen as
continuous functions of these profile norms, so the two strict coercivity
inequalities remain valid on a smaller interval $I_m$. The linear estimates,
nonlinear constants, and restart radius are uniform there, giving
a common small-data radius and decay rate.

For data of mass $\mu_0$ near the reference state, use
$|\mu_0-m|\le\|u_0-U_m\|_2$ and
\eqref{eq:mass-interval-bounds} to recenter at
$(U_{\mu_0},W_{\mu_0})$. The new primitive vanishes at both endpoints,
and its initial $H^1$ norm, together with that of $w_0-W_{\mu_0}$,
is at most $Cd_0$. The common fixed-mass estimate gives
\eqref{eq:interval-decay} and the asserted selection properties.
For nonnegative data, the negative-part estimate from the main proof
applies because $w_x\in L^2(0,T;L^\infty(I))$, making its Gronwall
coefficient integrable.
\end{proof}

\section{Numerical method and further results}\label{app:numerics}
We describe the spatial discretization, compatible perturbations, and
checks of relaxation and stationary asymptotics. Further $D=5$ runs
with larger mass changes and a different initial-data prescription are
provided in the separate supplementary numerical illustrations.

\subsection{Discretization and energy diagnostic}\label{app:numerical-method}
We use continuous piecewise-linear finite elements with nodal mass
lumping on polygonal approximations of the ellipse.
Starting from an eight-triangle fan on the unit disk, we repeatedly
bisect the edges, project new boundary vertices onto the unit circle,
and apply the affine map \((X,Y)\mapsto(X,0.65Y)\).
Let \(\mathsf M\) be the diagonal lumped mass matrix and
\(\mathsf K\) the symmetric stiffness matrix.
With \(\mathcal I\) denoting the interior nodes, the semidiscrete system is
\[
\begin{aligned}
 \mathsf M\dot{\mathbf u}
       &=-\mathsf K\mathbf z,\qquad
          \mathbf z=\mathbf u\phi(\mathbf w),\\
 \mathsf M_{\mathcal I}\dot{\mathbf w}_{\mathcal I}
       &=-D(\mathsf K\mathbf w)_{\mathcal I}
          -\mathsf M_{\mathcal I}(\mathbf u\mathbf w)_{\mathcal I},
\end{aligned}
\]
where products are componentwise and the boundary values of \(\mathbf w\)
are fixed to \(1\).
Since \(\mathsf K\mathbf1=0\), the discrete mass
\(\mathbf1^T\mathsf M\mathbf u\) is conserved by the semidiscrete equations.

For each mass \(\mu\), we compute the discrete equilibrium
\((Z_{\mu,h},W_{\mu,h},U_{\mu,h})\) from the stationary nutrient equation and
\[
 U_{\mu,h}\phi(W_{\mu,h})=Z_{\mu,h}\mathbf1,
 \qquad \mathbf1^T\mathsf M U_{\mu,h}=\mu.
\]

We integrate with the variable-order BDF method in SciPy's
\texttt{solve\_ivp}, using the analytic sparse Jacobian for implicit time stepping. The tolerances for each group of runs are
specified below. For related analyses of time-stepping schemes, see
\cite{Cheng2025BIT,Cheng2026IMA,ChengLiPromislowWetton2021}.

Write $\|a\|_{\mathsf M}^2=a^T\mathsf M a$ and use the corresponding
product norm for pairs.
For each trajectory, we solve the discrete Neumann problem
\[
 \mathsf Kq_{\mu,h}
 =\mathsf M(\mathbf u-U_{\mu,h}),\qquad
 \mathbf1^T\mathsf Mq_{\mu,h}=0,
\]
by the augmented system
\[
 \begin{pmatrix}
 \mathsf K&\mathsf M\mathbf1\\
 \mathbf1^T\mathsf M&0
 \end{pmatrix}
 \binom{q_{\mu,h}}{\lambda_h}
 =\binom{\mathsf M(\mathbf u-U_{\mu,h})}{0}.
\]
The multiplier is
$\lambda_h=\mathbf1^T\mathsf M(\mathbf u-U_{\mu,h})/
(\mathbf1^T\mathsf M\mathbf1)$; it vanishes for an exactly mass-compatible
state and removes the roundoff-level mean defect in the stored states.
We then evaluate
\[
 E_{0,h}(t)
 =q_{\mu,h}^T\mathsf Kq_{\mu,h}
  +\|\mathbf w-W_{\mu,h}\|_{\mathsf M}^2.
\]
The population part retains the exact semidiscrete identity
\[
 \frac12\frac{\dd}{\dd t}
 (q_{\mu,h}^T\mathsf Kq_{\mu,h})
 =-(\mathbf u-U_{\mu,h})^T
   \mathsf M(\mathbf z-Z_{\mu,h}\mathbf1).
\]
The diagnostic \(E_{0,h}\) approximates the low-order potential energy
$E_0$. We examine its decay and the solution errors under changes in
mesh size, solver tolerances, and initial amplitude.

\subsection{Compatible data and the diffusion bound}\label{app:compatible-data}
For \eqref{eq:compatible-numerical-data}, put $r^2=X^2+Y^2$ and
\[
 \eta=\begin{cases}
 \exp\bigl(1-(1-r^2/0.75^2)^{-1}\bigr),&r<0.75,\\
 0,&r\ge0.75.
 \end{cases}
\]
The bump $\eta$ is smooth and supported strictly inside the ellipse.
Symmetry gives $\int_\Omega\eta\,(X+0.4Y)=0$. Because the initial pair agrees
with $(U_1,W_1)$ throughout a boundary neighborhood,
\[
 \partial_\nu(u_0\phi(w_0))=0,\qquad
 w_0=b,\qquad D\Delta w_0-bu_0=0
 \quad\hbox{on }\partial\Omega.
\]
At fixed $D$, the norms of these bump functions are finite, and hence
\[
 \|u_0-U_1\|_{H^1}+\|w_0-W_1\|_{H^3}
 \le C\bigl(\varepsilon+|\mu-1|\bigr).
\]
On each mesh, we evaluate the bumps at the nodes, replace their
integrals by lumped quadrature, and remove the roundoff-level mean of
the odd term using the same bump. The final mass correction is also a
multiple of this interior bump. The discrete perturbations therefore
vanish at every node in the boundary collar. The $H^1\times H^3$
estimate above concerns the smooth continuous data; the finite element
initial values are their continuous piecewise-linear nodal approximations.

The geometric quantities in \eqref{eq:diffusion-threshold} have useful explicit
bounds here. The area is $0.65\pi$, and
\[
 \tau(x,y)=\frac{1-x^2-(y/0.65)^2}{2(1+0.65^{-2})}.
\]
The ellipse lies inside $(-1,1)\times(-0.65,0.65)$. Domain monotonicity
for the Dirichlet eigenvalue therefore gives
\[
 \lambda_1(\Omega)\ge\frac{\pi^2}{4}(1+0.65^{-2})>8.3074.
\]
With $b=1$, $a_b=1-e^{-1/4}$, and $L_b=e^{-1/4}$, substitution in
\eqref{eq:constants}--\eqref{eq:diffusion-threshold} gives
\[
 \frac{25}{2a_b}\le D_3(\mu)<57.0865
 \quad (0.999\le\mu\le1.001),\qquad
 \frac{25}{2a_b}=56.5101458\ldots.
\]
All upper-bound expressions are increasing in $\mu$, so the right-hand
bound follows by using $\mu=1.001$. In particular $D=60$ satisfies the
diffusion condition throughout this interval, whereas $D=5$ does not.

\subsection{Decay and refinement at large diffusivity}\label{app:compatible-results}
For $D=60$, take $\varepsilon=10^{-3}$ and
$\mu\in\{0.999,1,1.001\}$. The relative and absolute BDF tolerances
are $10^{-11}$ and $10^{-13}$, respectively. We store 255
times on $[0,5]$, combining a uniform grid with additional points in
the initial transient. On the main mesh, stationary nutrient residuals
in the unscaled nodal equations are below $1.08\times10^{-13}$.
The three sampled trajectories retain positive population and satisfy
$0.9987<w_h\le1$; their sampled mass drift is below
$2.1\times10^{-14}$.

Figure~\ref{fig:compatible-snapshots} shows the mass-$1$ spatial
relaxation. At $t=0,0.1,0.5,1$, its relative maximum population
amplitudes are $1$, $0.180$, $0.0739$, and $0.0245$.

\begin{figure}[htbp]
 \centering
 \includegraphics[width=\textwidth]{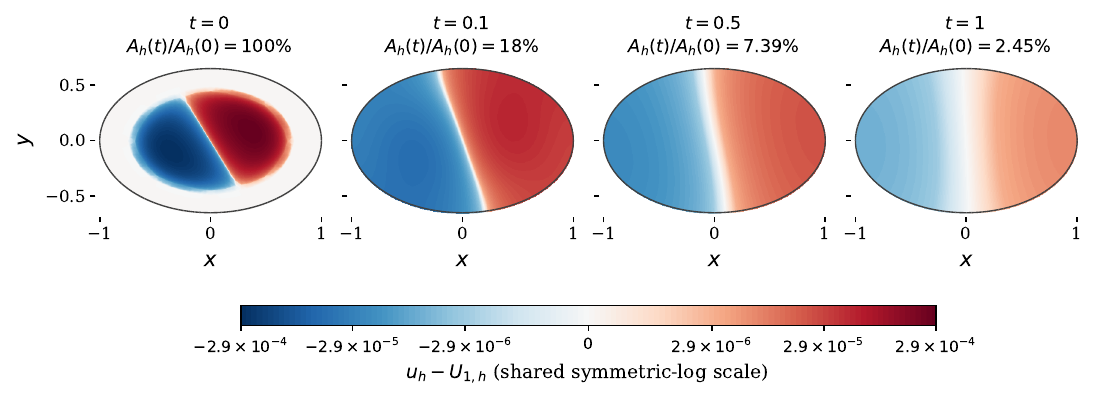}
 \caption{Nonradial relaxation for the compatible data
 \eqref{eq:compatible-numerical-data} with $D=60$, $\mu=1$, and
 $\varepsilon=10^{-3}$. The shared symmetric-log scale has limits
 $\pm2.9\times10^{-4}$ and linear core
 $|u_h-U_{1,h}|\le2.9\times10^{-6}$.
 The amplitude labels report decay without panelwise rescaling.}
 \label{fig:compatible-snapshots}
\end{figure}

Figure~\ref{fig:compatible-comparison}(a) compares the solutions with
their mass-selected equilibria and with the mass-$1$ reference state.
The selected-equilibrium errors decay below $10^{-9}$, while the
unequal-mass reference errors remain near $7.00\times10^{-4}$.
For the compatible mass-$1$ family, panel (b) compares $D=60$ with
$D=5$, recomputing the reference equilibrium at each diffusivity.
The stationary nutrient minima are approximately $0.998786$ and
$0.985555$, respectively. At the larger diffusivity, the profile is
closer to the reservoir concentration and the initial nutrient transient
is shorter. The fitted low-order energy rates on $1\le t\le4$ are
$4.396$ and $4.375$.

\begin{figure}[htbp]
 \centering
 \includegraphics[width=\textwidth]{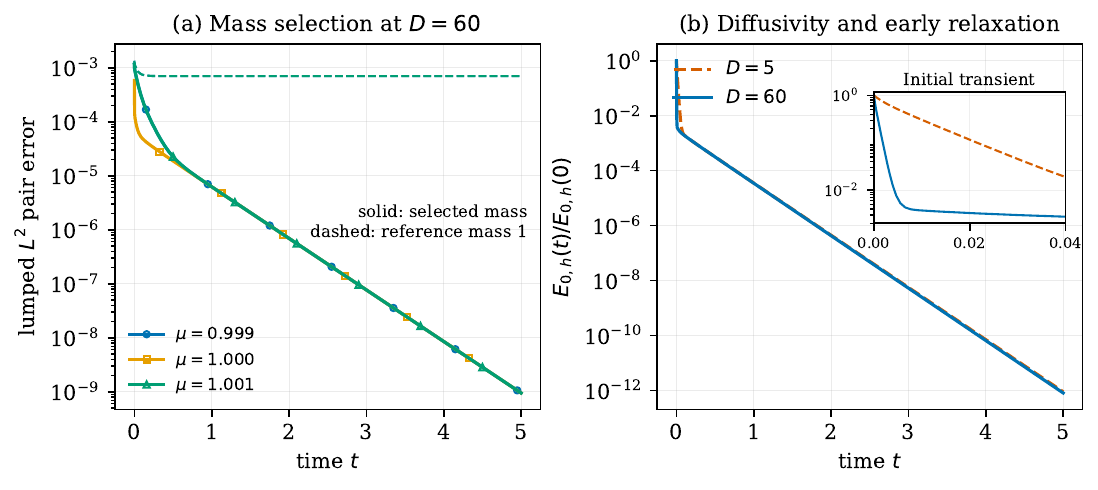}
 \caption{Compatible perturbations with $\varepsilon=10^{-3}$.
 (a) Selected-equilibrium errors (solid) and errors relative to the
 mass-$1$ reference (dashed) at $D=60$. The curves for nearby masses
 almost coincide. (b) Normalized low-order potential energy for mass
 $1$ at two diffusivities; the inset resolves the initial transient.
 The sufficient diffusion condition is verified for $D=60$.}
 \label{fig:compatible-comparison}
\end{figure}

Reducing the amplitude and tightening the time tolerances leave the
fitted decay rate nearly unchanged. For $\mu=1$ and $D=60$, amplitudes
$10^{-3}$, $5\times10^{-4}$, and $2.5\times10^{-4}$ give fitted
rates $4.3957, 4.3957, 4.3957$, with the relative time tolerance
reduced proportionally to the amplitude.
The initial energies divided by $\varepsilon^2$ agree to roundoff;
the smaller-amplitude rescaled trajectories differ from the principal
rescaled trajectory by at most $1.57\times10^{-6}$ in the sampled
lumped pair norm.
For the principal mass-$1$ run, tightening the relative/absolute
tolerances from $10^{-11}/10^{-13}$ to $10^{-13}/10^{-15}$ changes the
sampled solution by at most $2.93\times10^{-11}$ in the pair norm and
the fitted rate by less than $3.2\times10^{-4}$.

Table~\ref{tab:compatible-check} uses a 16641-node reference at
$t=0.5$ and relative/absolute tolerances $10^{-13}/10^{-15}$.
After restriction to common nodes, $e_{\rm full}$ compares full states
and $e_{\rm pert}$ compares $(u_h-U_{1,h},w_h-W_{1,h})$, both in the
coarse lumped pair norm. The former includes polygonal geometry and
recomputed equilibria. On each mesh, $e_{\rm tol}$ compares relative
tolerances $10^{-8}$ and $10^{-13}$, with absolute tolerance one
hundredth as large; $\delta M$ is the maximum sampled mass drift
in those two runs.

\begin{table}[htbp]
 \centering
 \caption{Refinement checks for the compatible mass-$1$ data at
 $D=60$, $\varepsilon=10^{-3}$, and $t=0.5$.}
 \label{tab:compatible-check}
 \small
 \begin{tabular}{rrrrrr}
 \toprule
 Nodes & $h_{\max}$ & $e_{\rm full}$ & $e_{\rm pert}$ & $e_{\rm tol}$ & $\delta M$\\
 \midrule
 81 & 0.3016 & $4.49\times10^{-3}$ & $7.65\times10^{-7}$ & $1.49\times10^{-9}$ & $1.11\times10^{-15}$\\
 289 & 0.1546 & $1.11\times10^{-3}$ & $3.05\times10^{-7}$ & $8.36\times10^{-10}$ & $1.89\times10^{-15}$\\
 1089 & 0.0782 & $2.64\times10^{-4}$ & $1.33\times10^{-8}$ & $1.36\times10^{-9}$ & $2.00\times10^{-15}$\\
 4225 & 0.0393 & $5.28\times10^{-5}$ & $3.50\times10^{-9}$ & $9.69\times10^{-10}$ & $1.02\times10^{-14}$\\
 \bottomrule
 \end{tabular}
\end{table}
Both spatial errors decrease under refinement, with the empirical
orders varying across the coarse levels.

\subsection{Checks of the stationary expansion}\label{app:expansion-numerics}
For the two laws in Section~\ref{sec:numerical-profiles}, the ratio
$g(w)=w/\phi(w)$ is affine: it is $1$ for $\phi(w)=w$ and
$(1+w)/2$ for $\phi(w)=2w/(1+w)$. Writing $v_h=1-W_{D,h}$,
the fixed-flux nutrient solve is therefore
\[
 (D\mathsf K_{\mathcal I\mathcal I}
       +Z_{D,h}g'\mathsf M_{\mathcal I})v_{h,\mathcal I}
 =Z_{D,h}\mathsf M_{\mathcal I}\mathbf1,
 \qquad v_h|_{\partial\Omega_h}=0,
\]
with $g'=0$ or $1/2$. A scalar root solve enforces
$\mathbf1^T\mathsf M U_{D,h}=1$. Solving for the deficit avoids
cancellation in the stiffness term for nearly constant nutrient profiles.

Let $\tau_h$ solve the discrete Dirichlet torsion problem
$\mathsf K_{\mathcal I\mathcal I}\tau_{h,\mathcal I}
=\mathsf M_{\mathcal I}\mathbf1$, and put
\[
 \rho_h=\frac1{\mathbf1^T\mathsf M\mathbf1},\qquad
 \langle\tau_h\rangle_h=
 \frac{\mathbf1^T\mathsf M\tau_h}{\mathbf1^T\mathsf M\mathbf1},\qquad
 A_{W,h}=\rho_h\tau_h,\quad
 A_{U,h}=\rho_h^2\kappa_1(\tau_h-\langle\tau_h\rangle_h).
\]
We measure the fixed-mesh scaled remainders
\begin{equation}\label{eq:discrete-expansion-errors}
 e_{W,h}=\|D(1-W_{D,h})-A_{W,h}\|_{\mathsf M},\qquad
 e_{U,h}=\|D(U_{D,h}-\rho_h)-A_{U,h}\|_{\mathsf M}.
\end{equation}
Centering at the continuum mean instead would introduce the term
$D(\rho_h-\rho)$, which grows with $D$ at fixed mesh.

On 66049 nodes, the observed orders of $e_{W,h},e_{U,h}$ under
doubling $D$ lie between $0.9995$ and $1.0000$.
The quantities $D e_{W,h}$ and $D e_{U,h}$, corresponding to
$D^2$ times the original remainders, are nearly constant:
\begin{table}[htbp]
 \centering
 \caption{Rescaled stationary remainders on 66049 nodes.}
 \label{tab:expansion-remainders}
 \small
 \begin{tabular}{crrrr}
 \toprule
 &\multicolumn{2}{c}{$\phi(s)=s$}
 &\multicolumn{2}{c}{$\phi(s)=2s/(1+s)$}\\
 $D$&$D e_{W,h}$&$D e_{U,h}$&$D e_{W,h}$&$D e_{U,h}$\\
 \midrule
 60 & $2.1809\times10^{-3}$ & $2.7605\times10^{-4}$ & $2.5961\times10^{-3}$ & $1.3594\times10^{-4}$\\
 120 & $2.1813\times10^{-3}$ & $2.7605\times10^{-4}$ & $2.5969\times10^{-3}$ & $1.3594\times10^{-4}$\\
 240 & $2.1815\times10^{-3}$ & $2.7605\times10^{-4}$ & $2.5974\times10^{-3}$ & $1.3594\times10^{-4}$\\
 \bottomrule
 \end{tabular}
\end{table}

At $D=240$, the contrast
$D\,\operatorname{osc}(U_{D,h})/
 (\rho_h^2\operatorname{osc}(\tau_h))$ is
$1.000000$ and $0.500018$, consistent with
$\kappa_1=1$ and $1/2$.

To measure the error in the leading coefficients, we use the analytic
values on the ellipse. Here $\rho=(0.65\pi)^{-1}$ and the explicit torsion
function in Appendix~\ref{app:compatible-data} has
$\langle\tau\rangle=\|\tau\|_\infty/2$.
We therefore compare $A_{W,h},A_{U,h}$ with the nodal values of
$\rho\tau$ and $\rho^2\kappa_1(\tau-\langle\tau\rangle)$.
Table~\ref{tab:expansion-coefficients} reports the lumped errors.
Both laws share the nutrient coefficient; the saturating law has
half the displayed population error.

\begin{table}[htbp]
 \centering
 \caption{Convergence of the discrete leading coefficients to their
 analytic continuum values.}
 \label{tab:expansion-coefficients}
 \small
 \begin{tabular}{rrrr}
 \toprule
 Nodes&$h_{\max}$&Nutrient coefficient error&Population coefficient error\\
 \midrule
 1089 & 0.0782 & $5.20\times10^{-5}$ & $4.50\times10^{-5}$\\
 4225 & 0.0393 & $1.30\times10^{-5}$ & $1.13\times10^{-5}$\\
 16641 & 0.0197 & $3.24\times10^{-6}$ & $2.81\times10^{-6}$\\
 66049 & 0.0099 & $8.10\times10^{-7}$ & $7.03\times10^{-7}$\\
 \bottomrule
 \end{tabular}
\end{table}

The errors decrease by approximately a factor of four at each refinement.
On the finest mesh the population coefficient error is still comparable
to the $D=240$ scaled expansion remainder, so spatial refinement remains
relevant at the largest diffusivity.

The maximum mass residual is below $5\times10^{-15}$.
After division by the lumped mass weights, the stationary nutrient
residual is below $5\times10^{-11}$ on the finest mesh.
Tightening the scalar root tolerances from $5\times10^{-15}/10^{-14}$
to $5\times10^{-16}/10^{-15}$ (absolute/relative) changes the scaled
population and nutrient profiles by less than $9\times10^{-14}$
in the lumped norm.

For $\phi(w)=w$, the ellipse admits an exact solution.
With $\tau_0=\|\tau\|_\infty$ and $c_D=1-\exp(-\rho\tau_0/D)$,
integration of the mass constraint gives
\[
 Z_D=\frac{Dc_D}{\tau_0},\qquad
 W_D=1-\frac{c_D}{\tau_0}\tau,\qquad U_D=\frac{Z_D}{W_D}.
\]
Here $\tau/\tau_0=1-r^2$ in elliptical radial coordinates, so
$\int_\Omega U_D=(D|\Omega|/\tau_0)[-\log(1-c_D)]=1$.
One-dimensional quadrature at $D=240$ gives continuum scaled
remainders of $9.090\times10^{-6}$ for nutrient and $1.149\times10^{-6}$
for population in $L^2(\Omega)$. Both decrease with order approximately
one under doubling $D$.

\section*{Acknowledgments}
X. Cheng is supported in part by the NSFC (Grant Nos. 12401270 and
42450192), the Natural Science Foundation of Shanghai (Grant No. 24ZR1404200),
and the Shanghai Magnolia Talent Plan Pujiang Project (Grant No. 24PJA007).
X. Song is supported in part by the Youth S \& T Talent Support Programme of Guangdong Provincial Association for Science and Technology (Grant No. SKXRC2026776).

\paragraph{AI assistance}
The authors used OpenAI Codex in preparing this article and its
supplementary material, including assistance with drafting and revision, checking of mathematical arguments and checking of code for numerical computations and figures.
The authors assume responsibility for all content.

\end{document}